\documentclass{article}
\usepackage{ed}
\newcommand{\col}[1]{ \begin{bmatrix} #1 \end{bmatrix} }
\newcommand{\kkk}{\Bbbk}
\newcommand{\vC}{\check{C}}
\newcommand{\dR}{\check{\Omega}}

\title{Residues in group completions and the \v{C}ech cohomology of $BG$}
\author{Edward Dewey}
\date{}
\begin{document}
\maketitle

\begin{abstract}
\noindent Let $G$ be a connected affine algebraic group over $\C$, $G \to
X$ be an open immersion of $G$-varieties, $Z = X-G$ and $i: Z
\to X$ be the inclusion.  Let $\alpha \in H^*(G,\C)$ be primitive.  We give a
method to compute the image of $\alpha$ in $H^*(Z, i^!\C_X)$, using a lift of
$\alpha$ along the first edge map of the \v{C}ech spectral sequence for
$H^*(BG, \C)$.  We apply it to the wonderful
compactification of a centerless semisimple group $G$.
\end{abstract}

\section*{Introduction}
Let $G$ be a connected affine algebraic group over $\C$, and let $X$ be an
\emph{equivariant partial completion} of $G$ - that is, a complex variety with
a left $G$-action and an open immersion $j: G \into X$ such that the action of
$G$ on $X$ restricts to the multiplication action of $G$ on itself.  Let
$Z = X-G$ and let $i: Z \into X$ be the inclusion.  We want to relate the Gysin map $H^*(Z/G,i^!\C_{X/G}) \to
H^*(X/G,\C)$ and the residue map $H^*(G,\C) \to H^{*+1}(Z,i^!\C_X)$.  
Our main result (\cref{resLemma}) does this in a special case, via the first edge map of the \v{C}ech spectral
sequence of $BG$. It is inspired by the degeneracy locus formula for Chern classes,
and can be used to recover a weak form of that formula (\cref{chern}).

\Cref{mainResults} explains \cref{resLemma}. 
\Cref{wonderful} applies it, in the case where $X$ is the wonderful
compactification of a centerless semisimple group $G$, to compute the residue
of a primitive element $\alpha \in H^*(G)$.  \Cref{proof} proves it. The theorem is more useful if you can compute the $1^{st}$
edge map. \Cref{edgeCalc} explains how to do that,
using Bott's approach to the Chern-Weil homomorphism.

There are many powerful theorems comparing the cohomology of a $G$-variety $X$
with its equivariant cohomology (\cite{GKM} contains many examples), and the
equivariant cohomology of $X$ was already used to compute a Gysin map in
\cite{loring}. 

\textbf{Acknowledgments:}  Thanks to Dima Arinkin and Andrei C\u{a}ld\u{a}raru
for some very useful conversations.  The author was supported by the NSF RTG
grant \#1502553.

\section{Residues and equivariant cohomology} \label{mainResults}

All schemes are over $\C$ and we always use the classical topology.  If $A$ is
a sheaf of abelian groups on $X$ there is an exact triangle $i_*i^!A \to A \to
j_*j\i A \to i_*i^!A[1]$ of sheaves on $X$.  The associated long exact sequence is the
\emph{Gysin sequence} \[\xymatrix{
\ldots \ar[r]& H^*(Z, i^!A) \ar[r]& H^*(X, A) \ar[r]& H^*(G, j\i A)
\ar[r]^{\res\hspace{2mm}}& H^{*+1}(Z, i^!A) \ar[r]& \ldots }\]
We call $\res: H^*(G, j^*A) \to  H^{*+1}(Z, i^!A)$ the \emph{residue map}.  For
$\alpha \in H^d(G, j\i A)$ we call $\res\,\alpha \in H^{d+1}(Z, i^!A)$ the
\emph{residue of $\alpha$ along $Z$}.

\begin{notation}
If $S$ is a scheme with left $G$ action, write $S/G$ for the stack quotient of
$S$ by $G$.  Write $BG$ for $pt/G$.  Write $S_\bul$ for the simplicial scheme
obtained as the nerve of the cover $S \to S/G$.  If $S = pt$ we will write
$BG_\bul$ instead of $pt_\bul$.  If $f: K \to S$ is a
$G$-equivariant map of schemes with $G$-action we write $f_p: K_p \to S_p$ for
the induced map.  We will often abuse notation and denote $f: K \to S$ and
its induced map $K/G \to S/G$ by the same letter.  

$\C_X$ is the constant sheaf on $X$ with stalk $\C$.  We are mostly interested
in cohomology with $\C$ coefficients and will write $H^*(X)$ for $H^*(X, \C)$.

If $E_\bul^{\bul,\bul}$ is a first-quadrant, cohomologically graded spectral
sequence abutting to a graded module $L^\bul$, write $L = F_0L \supset F_1L
\supset \ldots$ for the corresponding filtration and $\epsilon_n^{p,q}(E):
F_pL^{p+q} \to E_n^{p,q}$ the edge map.  We will often write
$\epsilon_n^{p,q}$, leaving $E$ implicit.
\end{notation}

\begin{definition}
There is a spectral functor $\vC_n^{p,q}(X_\bul, -)$ computing $H^{p+q}(X/G,-)$ with
$\vC_1^{p,q}(X_\bul,-) = H^q(X_p,-)$, called the \emph{\v{C}ech spectral sequence}
\cite[\href{http://stacks.math.columbia.edu/tag/06XJ}{Tag 06XJ}]{stacks-project}
\end{definition}

The differential on $\vC_1(X_\bul,-)$ is the alternating sum of
pullbacks $\sum (-1)^i \partial_i^*: H^q(X_p,\pi_p\i-) \to H^q(X_{p+1},
\pi_{p+1}\i-)$, where $\pi_p: X_p \to X/G$ is the natural map.

\begin{definition}
Call $\alpha \in H^*(G)$ \emph{primitive} if $\mu^*\alpha = \pi_0^*\alpha +
\pi_1^*\alpha$ where $\mu, \pi_0, \pi_1: G \times G \to G$ are the
multiplication and the two projections.  \end{definition}

Note that $\vC_2^{1,q}(BG_\bul,\C)$ is the subspace $H_{\mathrm{pr}}^q(G)
\subset H^q(G)$ consisting of the primitive elements.  $F_{p+1}H^{p+q}(BG)$ is the
kernel of $\epsilon_{n}^{p,q}$ for $0 \ll n$ and the codomain of
$\epsilon_n^{0,q}(\vC(BG_\bul, \C))$ is $0$ for $0 < n,q$.  Thus $F_1H^{q+1}(BG,\C) =
H^{q+1}(BG,\C)$ and the edge map $\epsilon_2^{1,q}$ is defined on all of
$H^{q+1}(BG,\C)$.  Schulman \cite{shulman} proved $\vC(BG_\bul, \C)$ degenerates on
page 2, so $\epsilon_2^{1,q}: H^{q+1}(BG) \to H_{\mathrm{pr}}^q(G)$ is
surjective.  Hopf proved that $H^*(G)$ is a wedge algebra on
$H^*_{\mathrm{pr}}(G)$ \cite[section 13]{samelson}. 

Consider the diagram 
\[\xymatrix{
Z \ar[r]   \ar[d]_{\pi_Z}  & X \ar[d]^\pi  & G \ar[d]\ar[l]   \\
Z/G \ar[r]                 & X/G           & pt   \ar[l]      }\]
As $\pi$ is smooth the diagram of Gysin sequences below commutes:
\[\xymatrix{
H^{q-1}(G) \ar[r] & H^q(Z, i^!\C_X) \ar[r]          & H^q(X) \ar[r]           & H^q(G)  \\
H^{q-1}(pt)\ar[r]\ar[u] & H^q(Z/G, i^!\C_{X/G}) \ar[r] \ar[u] & H^q(X/G) \ar[r] \ar[u]  & H^q(pt)\ar[u] }\]

For $q>0$, $H^q(pt)=0$ and there is an inverse map $H^q(Z/G, i^!\C) \gets
H^q(X/G)$.  In \cref{proof} we will prove our main result:
\theoremstyle{plain}
\newtheorem*{lemma:main}{Theorem \ref{resLemma}}
\begin{lemma:main}
Let $q>0$.  Then the diagram below commutes:
\[\xymatrix{
H^{q+1}(Z/G, i^!\C) \ar[d]^{\pi_Z^*}  & H^{q+1}(X/G) \ar[l] &
H^{q+1}(BG)\ar[d]^{\epsilon^{1,q}_2} \ar[l] \\
H^{q+1}(Z,   i^!\C)         &                           & H^{q}_{\mathrm{pr}}(G) \ar[ll]^{\res}
}\]
\end{lemma:main}

In order to apply this one wants to be able to compute
	$\epsilon_2^{1,q}(\vC(BG_\bul, \C))$. Below we will avoid thinking 
	hard about this by working only up to a scalar, but Bott has implicitly
	given an algorithm for it, which we describe \cref{edgeCalc}.

\begin{notation}
Write $x \sim y$ if $x = t y$ for some $t \in \C^*$.
\end{notation}

\begin{proposition}
\label{edgeProp}
Let $H^*(Y)_{red}$ be the subspace of reducible elements of the cohomology ring
of $Y$ under cup product.  Suppose $\dim H^{2d}(BG)/H^{2d}_{red}(BG)=1$, $\dim
H_{pr}^{2d-1}(G) = 1$, $\beta \in H^{2d}(BG)$ is irreducible and $\alpha \in
H_{pr}^{2d-1}(G)$.  Then $\epsilon_{2}^{1,2d-1}(\beta)\sim\alpha$. 
\end{proposition}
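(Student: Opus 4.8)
The plan is to show that $\epsilon_2^{1,2d-1}(\beta)$ is nonzero, since both source and target of the relevant map are (after passing to the appropriate subquotients) one-dimensional, and any nonzero scalar multiple of $\alpha$ spans $H^{2d-1}_{pr}(G)$. The key point to exploit is that $\epsilon_2^{1,2d-1}\colon H^{2d}(BG) \to H^{2d-1}_{pr}(G)$ is surjective (by Shulman's degeneration on page 2, as noted in the excerpt), so its kernel has codimension exactly $1$ in $H^{2d}(BG)$ given that $\dim H^{2d-1}_{pr}(G) = 1$. The crux is therefore to identify that kernel: I claim $\ker \epsilon_2^{1,2d-1} = F_2 H^{2d}(BG) = H^{2d}_{red}(BG)$, from which the result follows, since $\beta$ irreducible means $\beta \notin H^{2d}_{red}(BG) = \ker\epsilon_2^{1,2d-1}$, hence $\epsilon_2^{1,2d-1}(\beta) \neq 0$, hence $\epsilon_2^{1,2d-1}(\beta) \sim \alpha$.

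First I would recall that for an edge map coming from a first-quadrant spectral sequence, $\ker \epsilon_2^{1,q}$ on $F_1 L^{1+q}$ is precisely $F_2 L^{1+q}$ (by definition of the edge map as the composite $F_1 L \to F_1 L / F_2 L \hookrightarrow E_\infty^{1,q} = E_2^{1,q}$, the last equality by degeneration). So I need only show $F_2 H^{2d}(BG) = H^{2d}_{red}(BG)$. This is where the ring structure enters: the \v{C}ech spectral sequence for $BG$ is multiplicative, and the filtration $F_\bullet H^*(BG)$ is the associated decreasing filtration. The associated graded $\mathrm{gr}_F H^*(BG) = \bigoplus_p E_\infty^{p,*} = \bigoplus_p E_2^{p,*}$ is, by Shulman together with the Hopf structure theorem cited in the excerpt, the free graded-commutative algebra on the primitive classes sitting in filtration $1$; equivalently $E_2^{*,*}(BG_\bullet,\C)$ is a polynomial (or exterior, by parity) algebra on generators $E_2^{1,*} = H^*_{pr}(G)$. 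Consequently every element of $F_2 H^{2d}(BG)$, having filtration degree $\geq 2$, is a sum of products of at least two positive-degree classes — i.e.\ reducible — and conversely any product of two positive-degree classes lies in $F_1 \cdot F_1 \subset F_2$. That gives $F_2 H^{2d}(BG) = H^{2d}_{red}(BG)$.

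The main obstacle I anticipate is the careful bookkeeping to make the sentence "$E_2^{*,*}(BG_\bullet,\C)$ is free on $H^*_{pr}(G)$ in filtration $1$" fully rigorous — one has to invoke the multiplicativity of the \v{C}ech spectral sequence (available from the Stacks project reference, or from the fact that it is the spectral sequence of a cosimplicial commutative DGA), combine it with Shulman's page-$2$ degeneration to identify $E_2 = E_\infty = \mathrm{gr}_F H^*(BG)$, and then match this against Hopf's theorem that $H^*(G)$ is an exterior algebra on $H^*_{pr}(G)$, checking that the standard identification $H^*(BG) \cong (\text{sym/ext algebra on } H^*_{pr}(G)[1])$ as filtered rings is compatible with the \v{C}ech filtration. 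Once the associated graded is pinned down as a free algebra with generators concentrated in \v{C}ech-degree $1$, the statement "filtration $\geq 2$ $\iff$ reducible" in a fixed total degree is a short argument, and the hypotheses $\dim H^{2d}(BG)/H^{2d}_{red}(BG) = 1$ and $\dim H^{2d-1}_{pr}(G)=1$ then force $\epsilon_2^{1,2d-1}$ to restrict to an isomorphism on the relevant line, completing the proof.
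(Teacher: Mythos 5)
Your proposal is correct and follows essentially the same route as the paper: both reduce to showing $\epsilon_2^{1,2d-1}(\beta)\neq 0$, use multiplicativity of the \v{C}ech spectral sequence to obtain $H^{2d}_{red}(BG)\subset F_2H^{2d}(BG) = \ker\epsilon_2^{1,2d-1}$, and conclude from surjectivity of the edge map (Shulman's page-2 degeneration) together with the two one-dimensionality hypotheses. The only difference is that you also establish the reverse inclusion $F_2H^{2d}(BG)\subset H^{2d}_{red}(BG)$ via the freeness of $E_2$ on the primitives; this is true but unnecessary, since the inclusion you already have plus the fact that both subspaces have codimension one in $H^{2d}(BG)$ already forces the equality you need.
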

\begin{remark}
The conditions on $H^{2d}(BG)/H^{2d}_{red}(BG)$ and $H^{2d-1}_{pr}(G)$ are
satisfied whenever $G$ is semisimple and not of type $D_{2n}$.  They are also
satisfied if $G = GL_n$ or if $G$ is of type $D_{2n}$ and $d \neq n$ \cite[Section
3.7]{humph}.
\end{remark}
\begin{proof}
Since $\dim H_{pr}^{2d-1}(G)=1$ it suffices to check that
$\epsilon_2^{1,2d-1}(\beta)\neq 0$.  The cup product on $BG$ lifts to a bigraded multiplication on $\vC_2(BG_\bul,
\C)$, and $\vC_2(BG_\bul,\C)$ is generated as a ring by the $p=1$ row, that is
elements from $H^q(BG_1,\C)$. Therefore $H_{red}^{2d}(BG) \subset F_2H^*(BG) =
\ker \epsilon_2^{1,2d-1}$.  Since $\dim H^{2d}(BG)/H^{2d}_{red}(BG)=1$, if
$\epsilon_2^{1,2d-1}(\beta)=0$ then $\epsilon_2^{1,2d-1}$ is the zero map.  But
the \v{C}ech spectral sequence degenerates on page 2 so $\epsilon_2^{1,2d-1}$
is surjective, in particular nonzero.
\end{proof}

We would like to use \cref{resLemma} to compute $\res\, \alpha$ for $\alpha \in
H^*_{\mathrm{pr}}(G)$.  By ``compute'' we mean ``express in terms of the
intrinsic topology of $Z$'', ideally as a sum of products
of fundamental classes.  Our terminology is slightly nonstandard:

\begin{definition}
Let $i: K \into S$ be a closed immersion of smooth complex varieties of
codimension $r$.  The purity theorem gives a canonical isomorphism $i^! \C_S
\cong \C_K[-2r]$.  There is a corresponding isomorphism $H^0(K, \C) \to
H^{2r}(K, i^!\C_S)$.  We call the image of $1$ under this map the
\emph{fundamental class} of $K$, and denote it $[K]$.

If $S$ is smooth but $K$ is possibly singular, there is a unique element of $H^{2r}(K,
i^!\C_S)$ restricting to $[K] \in H^{2r}(K^{sm}, (i^{sm})^! \C_{S^{sm}})$ where $K^{sm}$
is the smooth locus of $K$ and $i: K^{sm} \to S-(K-K^{sm})$ is the restriction
of $i$.  We call that element the \emph{fundamental class} of $K$ and denote it $[K]$.

If $\xymatrix{K' \ar[r]^a & K \ar[r]^b & S}$ is a chain of closed subspaces
then the map
\[ a_! (b \circ a)^!\C_S = a_! a^! b^! \C_S \to b^! \C_S \]
lets us interpret the fundamental class of $K'$ as an element of $H^*(K,
b^!\C_S)$.  We will sometimes write $[K']_K$ to emphasize that we mean the image
of $[K']$ in $H^*(K, i^!\C_S)$, or $[K]_S$ to denote the image of $[K]$ in
$H^*(S)$.
\end{definition}

\begin{example}
\label{chern}
We can use \cref{resLemma} to recover a weak version of the degeneracy locus
formula for the Chern classes of a globally generated vector bundle.  Let $X$
be the space of $n \times n$ matrices and let $G = GL_n$.  For $d=1, \ldots, n$ 
let $U_d \subset X$ be the space of matrices whose top $d$ rows have rank $d$, 
and let $Z_d$ be the complement of $U_d$. We will show 

\begin{proposition}
\label{exProp}
$H^*(X/G) \cong H^*(BGL_n)$ is generated as a $\C$-algebra by the images of $[Z_d/G]$,
$d=1,\ldots,n$.
\end{proposition}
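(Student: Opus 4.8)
The plan is, first, to pin down $H^*(X/G)$; second, to reduce generation to a single non‑decomposability statement; and third, to prove that statement using \cref{resLemma}. For the first point: the $GL_n$‑action on $X$ under which every $Z_d$ is invariant is right multiplication, so, writing $X=\mathrm{Hom}(\C^n,\C^n)$ with $GL_n$ acting by precomposition, $X/G$ is the total space of the vector bundle $\mathcal V^{\vee}\otimes\underline{\C^n}$ over $BGL_n$, where $\mathcal V$ is the tautological rank‑$n$ bundle. Homotopy invariance then gives $H^*(X/G)\cong H^*(BGL_n)=\C[c_1,\dots,c_n]$ with $c_i=c_i(\mathcal V)$ and $\deg c_i=2i$; since the $c_i$ are algebraically independent, $H^{2m}(BGL_n)/H^{2m}_{red}(BGL_n)$ is one‑dimensional and spanned by the class of $c_m$ for $1\le m\le n$.

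Next, $Z_d$ is the preimage, under the linear submersion ``top $d$ rows'' $\phi_d\colon X\to\mathrm{Hom}(\C^n,\C^d)$, of the determinantal locus of maps of rank $\le d-1$, so it has codimension $n-d+1$ in $X$ and $[Z_d/G]_{X/G}\in H^{2(n-d+1)}(BGL_n)$. I claim it suffices to show that $[Z_d/G]_{X/G}$ is not decomposable (i.e.\ does not lie in $H^*_{red}(BGL_n)$) for $d=1,\dots,n$. Granting this, the previous paragraph gives $[Z_d/G]_{X/G}=\lambda_d\,c_{n-d+1}+\beta_d$ with $\beta_d$ decomposable and $\lambda_d\in\C^*$, and an upward induction on $k$ shows that the subalgebra $A$ generated by the classes $[Z_d/G]_{X/G}$ contains every $c_k$: for $k=1$ there are no decomposables in degree $2$, so $c_1=\lambda_n^{-1}[Z_n/G]_{X/G}\in A$; and if $c_1,\dots,c_{k-1}\in A$ then $\beta_{n-k+1}$, which is a polynomial in $c_1,\dots,c_{k-1}$ only (each of its monomials has at least two factors, hence all of degree $<2(n-k+1)$), lies in $A$, so $c_k\in A$. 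Thus $A=H^*(X/G)$.

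To prove the non‑decomposability I would localize to the open substack $U_{d-1}/G\subseteq X/G$, whose complement $Z_{d-1}/G$ has codimension $n-d+2\ge 2$; then $H^*(X/G)\to H^*(U_{d-1}/G)$ is a ring isomorphism through degree $2(n-d+1)$, so it is enough to work over $U_{d-1}/G$. There the ``top $d-1$ rows'' map $\mathcal V\to\underline{\C^{d-1}}$ is surjective, its kernel $\mathcal K$ is a rank‑$(n-d+1)$ subbundle with trivial quotient (so $c(\mathcal K)=c(\mathcal V)|_{U_{d-1}/G}$), and $Z_d\cap U_{d-1}$ is the zero locus of the tautological section ``$d$‑th row restricted to $\mathcal K$'' of $\mathcal K^{\vee}$. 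The assertion thus becomes the Euler‑class identity $[Z_d/G]_{X/G}|_{U_{d-1}/G}=\pm\,c_{n-d+1}(\mathcal K)=\pm\,c_{n-d+1}(\mathcal V)|_{U_{d-1}/G}$, whose right side is non‑decomposable because $H^*(X/G)\to H^*(U_{d-1}/G)$ is a ring isomorphism through this degree.

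This Euler‑class identity is the content of \cref{resLemma} together with \cref{edgeProp} applied to $GL_{n-d+1}$ (whose numerical hypotheses hold, since $H^{2k-1}_{pr}(GL_m)$ and $H^{2k}(BGL_m)/H^{2k}_{red}(BGL_m)$ are one‑dimensional): \cref{edgeProp} pins the lift of the relevant primitive class to the top Chern class, and \cref{resLemma} transports its residue to the fundamental class of a zero locus. The main obstacle is exactly this matching. \cref{resLemma} is written for the single pair $(X,Z)$ with $Z=X-G$ the complement of the \emph{group}, whereas the proposition ranges over all of $Z_1,\dots,Z_n$ and the open complements $U_d$ of the $Z_d$ are homogeneous spaces rather than $GL_n$ itself; so one must either mildly extend \cref{resLemma}, or bridge the gap using naturality of the residue under the submersions $\phi_d$ and the residue computation for determinantal varieties (on $D_{d-1}$ the residue of the lowest exterior generator of $H^*(\mathrm{Surj}(\C^n,\C^d))$ generates the one‑dimensional group $H^{2(n-d+1)}(D_{d-1},i^!\C)$ and hence equals $[D_{d-1}]$ up to scalar), while being careful that the map $\pi_Z^*$ need not be injective. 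The localization to $U_{d-1}/G$, which turns each $Z_d$ into a top‑Chern‑class problem, is what keeps this under control.
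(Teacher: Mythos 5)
Your core computation---restrict to $U_{d-1}/G$, exhibit $Z_d\cap U_{d-1}$ as the zero locus of a tautological section of the dual of the rank-$(n-d+1)$ kernel bundle, and conclude $[Z_d/G]_{X/G}=\pm c_{n-d+1}$---is a genuinely different route from the paper's, and your reduction of generation to indecomposability (one class in each even degree $2,\dots,2n$) is sound. But the argument as written has a gap exactly where you flag one: you never prove the Euler-class identity. You defer it to \cref{resLemma} plus \cref{edgeProp}, and then correctly observe that \cref{resLemma} does not apply to the pairs $(U_{d-1},Z_d)$ or $(U_d,Z_d)$ because the open stratum is a homogeneous space, not the group; the promised ``mild extension or naturality argument'' is never supplied, so the key step is unproven. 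If you instead simply invoke the standard fact that a transverse section of a rank-$r$ bundle has zero locus of class the Euler class, your proof closes---but then it uses none of the paper's machinery and essentially assumes the degeneracy-locus formula that \cref{exProp} is presented as a weak version of.

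The bridge you were looking for is not an extension of \cref{resLemma}: the paper applies the theorem exactly once, to the single pair $G=GL_n\subset X$ with $Z=X-GL_n$, and sees all the $Z_d$ through it. Write $H^*(GL_n)=\Lambda[x_1,\ldots,x_n]$ with $x_d$ primitive; since $x_d$ is pulled back from the Stiefel-type quotient $V_{d-1}$ and $GL_n\to V_{d-1}$ factors through $U_d$, it extends to $\bar x_d\in H^*(U_d)$. Contractibility of $X$ makes the residue for the pair $(Z_d,U_d)$ an isomorphism in the relevant degree, whence $\res\,\bar x_d\sim[Z_d]$, and makes the residue along $Z=X-GL_n$ injective on $H^{>0}(GL_n)$; compatibility of the Gysin sequences for the nested strata $Z_d\subset Z\subset X$ then gives $\res\,x_d\sim[Z_d]_Z$. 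Feeding $[Z_d/G]_{X/G}$ through the square of \cref{resLemma} produces the same class $[Z_d]_Z$, so injectivity forces $\epsilon_2^{1,2d-1}[Z_d/G]_{X/G}\sim x_d$. Generation then follows because the \v{C}ech spectral sequence degenerates at $E_2$ and its $E_2$ page is generated by $H^*_{\mathrm{pr}}(G)$ (Shulman)---the analogue of your indecomposability count. (Two reconcilable discrepancies if you pursue your version: with the literal definition of $U_d$ your codimension $n-d+1$ is the right one and the paper's ``codimension $d$'' uses the complementary labelling, which does not affect the statement; and your worry about injectivity of $\pi_Z^*$ is moot in the paper's argument, since injectivity is needed only for $\res$ on $H^*(GL_n)$, where it comes for free from the contractibility of $X$.)
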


This is a weakening of the degeneracy locus formula for the Chern classes $c_i$ in the following
sense: Let $E \to B$ be a rank-$n$ vector bundle on $B$, and suppose that it
admits $n$ global sections $e_i$.  The $e_i$ define a factorization of $B \to
BG$ through some $f: B \to X/G$. If $f$ was smooth then $f^*[Z_d/G]$ is
$[W_d]$, where $W_d = f\i\left( Z_d/G \right) \subset B$.  Thus \cref{exProp}
implies that the cohomological invariants of $E$ are generated by the classes
$[W_d] \in H^*(B)$.  On the other hand, $W_d$ is the locus where $e_1, \ldots,
e_{n-d+1}$ have rank less than $n-d+1$.  The degeneracy locus formula says that
$[W_d] = c_d(E)$, identifying the classes $[W_d]$ with a \emph{particular} set of
generators.

\begin{proof}
Write $[Z_d/G]_{X/G}$ for the image of $[Z_d/G]$ in $H^*(X/G)$.  Let $g_m: GL_m
\into GL_n$ be the map 
\[g_m(M) = 
	\left[ \begin{array}{cc} 
		I_{n-m} & 0 \\ 
		0 & M 
	\end{array} \right] 
\]
$V_m := GL_n/g_m(GL_m)$ is a Stiefel manifold.  $H^*(GL_n, \C) =
\Lambda[x_1,\ldots, x_n]$ where $x_d$ is a primitive element of degree $2d-1$,
and $x_d$ is pulled back from $H^*(V_{d-1}, \C)$.  But $GL_n \to V_{d-1}$
factors through $U_d$, so $x_d$ extends to a class $\bar{x}_d \in H^*(U_d,
\C)$.  $Z_d$ has codimension $d$, and the Gysin sequence for the stratification
\[ Z_d \to X \gets U_d \] 
shows that $\res\, \bar{x}_d \sim [Z_d]_{Z} \in H^{2d}(Z_d,i^!\C)$.  $X$ is
contractible, so the residue map is injective, so \cref{resLemma} implies
$\epsilon_2^{1,2d-1}[Z_d/G]_{X/G} \sim x_d$.  This implies the classes
$[Z_d/G]_{X/G}$ generate $H^*(X/G)$, since the \v{C}ech spectral sequence
degenerates on page 2 and the $E_2$ page of the \v{C}ech spectral sequence is
generated by $H_{pr}^*(G)$ \cite{shulman}.  \end{proof}
\end{example}

\section{Application to the wonderful compactification}   
\label{wonderful}

Let $G$ be semisimple and centerless and let
$X$ be the \emph{wonderful compactification} of $G$.  In this section we will
use \cref{resLemma} to compute $\res\, \alpha$ for $\alpha \in H^*_{pr}(G)$.
The point is that \cref{resLemma} lets us work with equivariant cohomology, and
the equivariant cohomology of $X$ is well understood thanks to
\cite{strickland}. 
	
We recall some facts about the wonderful compactification from
\cite{WC}, \cite{strickland}.
Let $\tilde{G}$ be the universal cover of $G$ and 
$\tilde{B} \subset \tilde{G}$ a Borel subgroup.  Let $\lambda$ be a regular weight of
$\tilde{G}$ and let $V^\lambda$ be the corresponding highest weight
representation.  There is a natural inclusion $G \to\pp\, \emo(V)$ and $X$ is
the closure of $G$ under this embedding.  $X$ possesses both a left $G$-action
and a right $G$-action extending the multiplication actions.  We will let $G^2
= G \times G$ act from the left via $(g_1,g_2)\cdot x = g_1xg_2\i$.

Let $B$ be the
image of $\tilde{B}$ in $G$ and let $\Delta = \{\rho_1,\ldots,\rho_l\}$ be a
set of simple positive roots with respect to $B$.  $Z = X-G$ is a normal
crossings divisor with irreducible components $D_\rho$ labeled by the simple
roots.  For $\Gamma \subset \Delta$ let $D_\Gamma = \bigcap_{\rho\in\Gamma}
D_\rho$ (in particular $X=D_\emptyset$).  Then $D_\Gamma$ is a smooth $G^2$-orbit closure.   Let $P_\Gamma \subset G$ be the parabolic subgroup
generated by $B$ together with all root subgroups $G_\rho$ of
roots $\rho \in \Gamma$, and let $P_\Gamma^-$ be the opposite parabolic.  Let
$G_I$ be the adjoint quotient of $P_I$.  Then $D_\Gamma$ is a fiber bundle over
$G/P_\rho \times P^-_\rho \bs G$ with fiber the wonderful compactification of
$G_I$.  In particular $D_\Delta = G/B \times B^-\bs G$ and $D_\Delta/G^2 = BB
\times BB^-$.  

Write $H^*(BB \times BB^-) = \C[\fk{t} \times \fk{t}] = \C[u_1,\ldots,u_l,v_1,\ldots,v_l]$
where $u_i$ is the first chern class of the pullback of $\rho_i$ to $B$ and $v_i$ is the first Chern
class of the pullback of $\rho_i$ to $B^-$.  Let
\[ x_i = u_i-v_i,\hspace{2cm} y_i=u_i+v_i\]
For $\Gamma \subset \Delta$ let
$W_\Gamma  \subset W$ be the subgroup generated by the reflections associated
to the simple roots $\rho \in \Gamma$, and let $x^\Gamma = \prod_{\rho_i\in
\Gamma} x_i$.  $\C[y_1,\ldots,y_l]$ has an action of
$W$ by via the diagonal action on $\fk{t} \times \fk{t}$.  For $\Lambda \subset
\Delta$ let $A_\Lambda\subset
\C[u_1,\ldots,u_l, v_1,\ldots,v_l]$ be the span of the elements of the form
\[x^\Gamma q(x_1,\ldots,x_l) p(y_1,\ldots,y_l)\]
where $p$ is invariant under $W_{\Delta-(\Lambda \cup \Gamma)}$.
Set $A = A_\emptyset$.  For any graded ring
$R$, let $\tilde{R} \subset R$ be the ideal generated by all elements
of strictly positive degree.

\begin{theorem} (Strickland \cite{strickland}) \hspace{1mm}\label{strickThm}
\begin{enumerate}
\item The pullback $H^*(D_\Lambda/G^2) \to H^*(D_\Delta/G^2)$ is an injection with image $A$.   
\item Under this injection, $[D_\Lambda]$ maps to $x^\Lambda$.
\item The inclusion $\C[u,v]^{W\times W} = A_\Lambda^{W \times W} \into A_\Lambda$ corresponds to
pullback along the projection $D_\Lambda/G^2 \to BG^2$.  
\item $H^*(D_\Lambda/G^2) \to H^*(D_\Lambda)$ is surjective with kernel generated by $\tilde{H}^*(BG^2)$.
\end{enumerate}
\end{theorem}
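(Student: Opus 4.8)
The plan is to deduce the whole theorem from three structural inputs about the smooth projective $G^2$-varieties $D_\Lambda$: \textbf{(i)} the only $(T\times T)$-fixed points of $D_\Lambda$ lie in the closed orbit $D_\Delta$, so in particular $D_\Lambda^{T\times T}$ is finite; \textbf{(ii)} each $D_\Lambda$ admits a paving by affine spaces — the Bialynicki--Birula cells of a generic one-parameter subgroup of $T\times T$ — so that $H^{\mathrm{odd}}(D_\Lambda)=0$; and \textbf{(iii)} an inductive description of $H^*(D_\Lambda/G^2)=H^*_{G^2}(D_\Lambda)$ coming from the fibration of $D_\Lambda$ over a product of partial flag varieties. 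From (ii) I will get part (4) and the freeness of $H^*(D_\Lambda/G^2)$ over $H^*(BG^2)$; from (i) and (ii), the injection in part (1), and with a local weight computation also parts (2) and (3); and from (iii), the identification of the image in (1) with $A_\Lambda$. (Since this is Strickland's theorem one could of course just cite it; the following is how I would reconstruct the argument.)

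First I would establish (i): a point of the $G^2$-orbit $D_\Gamma^\circ$ has stabilizer whose maximal torus has rank $l+|\Gamma|$, which equals $\mathrm{rk}(T\times T)=2l$ only for $\Gamma=\Delta$, so a conjugate of $T\times T$ can lie in such a stabilizer only for the closed orbit; hence $D_\Lambda^{T\times T}=D_\Delta^{T\times T}$, a finite set. Then (ii) follows by Bialynicki--Birula for a generic one-parameter subgroup of $T\times T$, whose fixed locus is this finite set, so $H^*(D_\Lambda)$ is concentrated in even degrees. Now for the consequences. The Serre spectral sequence of $D_\Lambda\to D_\Lambda/G^2\to BG^2$ collapses at $E_2$ for parity reasons (as $H^*(BG^2)$ is also even), giving freeness of $H^*(D_\Lambda/G^2)$ over $H^*(BG^2)$ and, after lifting a basis of $H^*(D_\Lambda)$ along the surjective fibre restriction, exactly statement (4). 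Next, equivariant formality together with (i) makes $H^*_{T\times T}(D_\Lambda)\hookrightarrow H^*_{T\times T}(D_\Lambda^{T\times T})=H^*_{T\times T}(D_\Delta^{T\times T})$ factor through $H^*_{T\times T}(D_\Delta)$, so restriction to $D_\Delta$ is injective; passing to $(W\times W)$-invariants — which recovers $G^2$-equivariant cohomology from $(T\times T)$-equivariant cohomology over $\C$ — gives the injection $H^*(D_\Lambda/G^2)\hookrightarrow H^*(D_\Delta/G^2)=H^*(BB\times BB^-)=\C[u,v]$ of (1), and identifies the image of the pullback $H^*(BG^2)=H^*(B(T\times T))^{W\times W}$ with $\C[u,v]^{W\times W}$, which is (3) once one checks that $A_\Lambda$ contains $\C[u,v]^{W\times W}$ as its $(W\times W)$-invariants. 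Finally (2): the divisors $D_\rho$ cross normally, so $[D_\Lambda]=\prod_{\rho\in\Lambda}[D_\rho]$, and a local model for $X$ along the closed orbit shows the normal line of $D_\rho$ carries $T\times T$ through the character $u_\rho-v_\rho=x_\rho$ (the minus sign from the $g_2^{-1}$ in the action), so $[D_\rho]$ restricts to $x_\rho$ and $[D_\Lambda]$ to $x^\Lambda$.

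It remains to identify the image of $H^*(D_\Lambda/G^2)$ with $A_\Lambda$, and here I would induct on the semisimple rank $l$. If $\Lambda\ne\emptyset$, put $J=\Delta-\Lambda$; then $D_\Lambda$ is a fibre bundle over $\mathcal F=G/P_J\times P_J^-\backslash G$ with fibre the wonderful compactification $\overline L$ of the adjoint group of the Levi of $P_J$, of rank $|J|<l$. Equivariant formality makes Leray--Hirsch available, so $H^*_{G^2}(D_\Lambda)$ is obtained from $H^*_{G^2}(\mathcal F)=\C[u]^{W_J}\otimes\C[v]^{W_J}$ by adjoining the fibre classes, which by the inductive hypothesis applied to $\overline L$ are the elements $x^\Gamma q(x)p(y)$ with $p$ invariant under $W_{J-\Gamma}=W_{\Delta-(\Lambda\cup\Gamma)}$; restricting to $D_\Delta$ then turns this module description into precisely the defining description of $A_\Lambda$. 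The base case is $\Lambda=\emptyset$, that is $D_\emptyset=X$, where the fibration is trivial and one must compute $H^*_{G^2}(X)$ directly: it should be generated over the pullback $\C[u,v]^{W\times W}$ of $H^*(BG^2)$ by the boundary classes $[D_\rho]$, with the relations forcing the image in $\C[u,v]$ to be $A=\sum_\Gamma x^\Gamma\,\C[x]\,\C[y]^{W_{\Delta-\Gamma}}$.

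The hard part is precisely this base case — the Bifet--De Concini--Procesi computation of $H^*_{G^2}(X)$, and above all its restriction to the closed orbit. The most economical route is to exploit the De Concini--Procesi big cell $X_0\cong R_u(P)^-\times\mathbb A^l\times R_u(P)$ and its $G^2$-translates, which cover $X$: restriction to $X_0$ kills the pullback from $BG^2$ and identifies the quotient with the $(T\times T)$-equivariant cohomology of the toric variety $\mathbb A^l$, governed by the fan of Weyl chambers — where the classes $x_\rho$ and the invariance patterns under $W_{\Delta-\Gamma}$ surface — while a Gysin/Mayer--Vietoris analysis along the divisors $D_\rho$ pins down the remaining relations and the restriction to $D_\Delta$. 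Alternatively one simply quotes \cite{strickland} (or Bifet--De Concini--Procesi) for $H^*_{G^2}(X)$, after which only the local weight computation of (2) is needed to describe the restriction to $D_\Delta$, whence the rest of (1)--(3) as above.
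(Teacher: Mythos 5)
The paper does not actually prove this statement: it is quoted from Strickland, and the only "proof" content in the paper is the remark that Strickland treats $\Lambda=\emptyset$ and that the arguments generalize to arbitrary $\Lambda$ without extra work. So there is no in-paper argument to measure you against; what you have written is a reconstruction of the standard Bifet--De Concini--Procesi/Strickland argument, and as such it is essentially sound. Your structural inputs are the right ones: the $T\times T$-fixed points all lie in the closed orbit (your rank count $l+|\Gamma|$ for the stabilizer torus of a point of the open orbit of $D_\Gamma$ is correct), the Bialynicki--Birula paving gives vanishing of odd cohomology, parity collapse of the Serre spectral sequence gives part (4) and freeness over $H^*(BG^2)$, localization plus equivariant formality gives the injectivity in (1) and, together with the identification of the image of $H^*(BG^2)$, part (3), and the normal-bundle weight computation gives (2). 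Two small cautions: the phrase "passing to $(W\times W)$-invariants gives the injection into $H^*(D_\Delta/G^2)=\C[u,v]$" is loose, since $H^*(D_\Delta/G^2)$ is the full polynomial ring $\C[u,v]$ and not an invariant ring --- the clean statement is that $H^*(D_\Lambda/G^2)\into H^*_{T\times T}(D_\Lambda)$ and the latter injects into $H^*_{T\times T}(D_\Delta)$ by localization, whence the claim after restricting to $G^2$-equivariant classes. More importantly, the genuinely hard content of the theorem --- that the image of the injection in (1) is exactly $A_\Lambda$, i.e.\ the base case $\Lambda=\emptyset$ of your induction --- is only sketched and ultimately deferred to a citation; that is acceptable here precisely because the paper itself defers the entire theorem to \cite{strickland}, but it means your write-up, like the paper's, is not self-contained on the one point where all the work lives.
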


\begin{remark} Strickland treats only the case $\Lambda = \emptyset$,
but their arguments generalize without any extra work.
\end{remark}

%
%
%

\begin{corollary}
\label{presentation}
$H^*(Z/G, i^!\C_{X/G^2})$ is canonically isomorphic to the cokernel of 
\begin{align*}
 \bigoplus_{i<j} A_{ij} \to \bigoplus_k A_k &&
(f_{ij}) \to \left(\sum_{i<k} x_if_{ik} - \sum_{j>k} x_jf_{kj}
\right)
\end{align*}

$H^*(Z, i^!\C)$ is canonically isomorphic to the cokernel of
\[ \bigoplus_{i<j} \left( A_{ij}/\tilde{A}_{ij}^{\,W\times W} \right) \to
\bigoplus_k \left( A_k/\tilde{A}_k^{\,W\times W} \right)\]
given by the same formula.
\end{corollary}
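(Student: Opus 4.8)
The plan is to feed \cref{strickThm} into the Mayer--Vietoris spectral sequence of the closed cover $Z=\bigcup_{\rho\in\Delta}D_\rho$. For the functor $H^{*}_Z(X,-)=H^{*}(Z,i^{!}-)$ this cover (iterate the localization triangles, or dualize to Mayer--Vietoris for Borel--Moore homology) produces a spectral sequence with $E_1$-page the complex
\[\cdots\longrightarrow\bigoplus_{|\Gamma|=2}H^{*}(D_\Gamma,i_{D_\Gamma}^{!}\C_X)\longrightarrow\bigoplus_{|\Gamma|=1}H^{*}(D_\Gamma,i_{D_\Gamma}^{!}\C_X),\]
the differential being the alternating sum, over $\Gamma\subset\Gamma'$ with $|\Gamma'\setminus\Gamma|=1$, of the Gysin maps for $D_{\Gamma'}\hookrightarrow D_\Gamma$, and converging to $H^{*}(Z,i^{!}\C_X)$; replacing each $D_\Gamma$ by the stack $D_\Gamma/G^{2}$ gives the version converging to $H^{*}(Z/G^{2},i^{!}\C_{X/G^{2}})$. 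Each $D_\Gamma$ is smooth, so purity identifies $i_{D_\Gamma}^{!}\C_X\cong\C_{D_\Gamma}[-2|\Gamma|]$, hence the columns of $E_1$ with the $H^{*}(D_\Gamma)$ (resp.\ $H^{*}(D_\Gamma/G^{2})$) up to a shift; by \cref{strickThm}(1), applied with $D_\Gamma$ in place of $X$, the equivariant $E_1$-complex becomes $\bigoplus_{|\Gamma|=\bullet}A_\Gamma$. Its differential is a signed sum of ``multiply by $x_k$'' maps: the Gysin map $H^{*}(D_{\Gamma'}/G^{2})\to H^{*+2}(D_\Gamma/G^{2})$ sends $1$ to $[D_{\Gamma'}]_{D_\Gamma}=x_k$ (here $\{k\}=\Gamma'\setminus\Gamma$, by \cref{strickThm}(2) for the base $D_\Gamma$), and the self-intersection formula $c^{*}c_!(\beta)=c^{*}(x_k)\cup\beta$ together with injectivity of the pullback $A_\Gamma\hookrightarrow A_{\Gamma'}$ (a factor of the injection in \cref{strickThm}(1)) forces $c_!(\beta)=x_k\beta$. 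So the $\Gamma$-size-$1$ differential $\bigoplus_{i<j}A_{ij}\to\bigoplus_kA_k$ sends $f\in A_{ij}$ ($i<j$) to $x_if$ in the summand $A_j$ minus $x_jf$ in the summand $A_i$ --- precisely the map in the statement. For the non-equivariant sequence the columns are the $H^{*}(D_\Gamma)=A_\Gamma/\tilde A_\Gamma^{\,W\times W}$ (\cref{strickThm}(4)), and by naturality of Gysin maps under restriction to a fibre of $-/G^{2}\to BG^{2}$ together with the surjectivity in \cref{strickThm}(4), the differential is the reduction of the equivariant one, i.e.\ the ``same formula''.

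Both isomorphisms thus follow once one shows the complex $C_\bullet:=\bigl(\bigoplus_{|\Gamma|=\bullet}A_\Gamma,\ \text{signed }x_k\text{-multiplications}\bigr)$ has homology concentrated in $\Gamma$-size $1$: a single nonzero column of $E_2$ leaves no room for higher differentials or extension problems, so $E_2=E_\infty$ and $H^{*}(Z/G^{2},i^{!}\C_{X/G^{2}})\cong\mathrm{coker}\bigl(\bigoplus_{i<j}A_{ij}\to\bigoplus_kA_k\bigr)$ (up to the degree shift built into $i^{!}$). To prove $C_\bullet$ exact in sizes $\ge 2$ I would decompose $A_\Gamma$ by monomials in the $x_i$. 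Writing $\mathbf 1_\Gamma\in\{0,1\}^{\Delta}$ for the indicator of $\Gamma$, $\mathrm{supp}(a)$ for the support of $a\in\mathbb Z_{\ge0}^{\Delta}$, and $\C[y]=\C[y_1,\ldots,y_l]$, a check against the definition gives
\[A_\Gamma=\bigoplus_{a\in\mathbb Z_{\ge0}^{\Delta}}x^{a}\cdot\C[y]^{\,W_{\Delta\setminus(\Gamma\cup\mathrm{supp}(a))}},\]
and the differential sends $x^{a}p(y)$ in the $\Gamma$-summand to $\sum_{k\in\Gamma}(\pm)\,x^{a+\mathbf 1_k}p(y)$ in the $(\Gamma\setminus k)$-summand. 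The vector $c:=a+\mathbf 1_\Gamma$ is preserved by the differential, so $C_\bullet=\bigoplus_cC_\bullet^{(c)}$, and the identity $\Delta\setminus\bigl(\Gamma\cup\mathrm{supp}(c-\mathbf 1_\Gamma)\bigr)=\Delta\setminus\mathrm{supp}(c)$ --- valid for every $\Gamma\subseteq\mathrm{supp}(c)$ --- exhibits $C_\bullet^{(c)}$ as $\C[y]^{\,W_{\Delta\setminus\mathrm{supp}(c)}}$ tensored with the simplicial chain complex of the full simplex on the vertex set $\mathrm{supp}(c)$. That simplex is contractible, so $C_\bullet^{(c)}$ has homology $\C[y]^{\,W_{\Delta\setminus\mathrm{supp}(c)}}$ in $\Gamma$-size $1$ and $0$ in sizes $\ge 2$; summing over $c$ finishes the equivariant case (and incidentally shows $\mathrm{coker}\bigl(\bigoplus_{i<j}A_{ij}\to\bigoplus_kA_k\bigr)\cong\bigoplus_c\C[y]^{\,W_{\Delta\setminus\mathrm{supp}(c)}}$).

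The second isomorphism I would get from the first by base change along $BG^{2}\to\mathrm{pt}$. The module $M:=H^{*}(Z/G^{2},i^{!}\C_{X/G^{2}})$ is free over $H^{*}(BG^{2})$: since $X$ and $Z$ admit $B\times B$-stable affine pavings \cite{WC}, the pair $(X,Z)$ is equivariantly formal for $G^{2}$, so $M$ --- being the $W\times W$-invariants in the corresponding free $H^{*}(BT^{2})$-module --- is $H^{*}(BG^{2})$-free. Freeness makes the Serre spectral sequence of $Z/G^{2}\to BG^{2}$, with fibre $Z$ and coefficients $i^{!}\C_X$, degenerate, so $H^{*}(Z,i^{!}\C_X)=M\otimes_{H^{*}(BG^{2})}\C$; applying $-\otimes_{H^{*}(BG^{2})}\C$ to the presentation $\bigoplus_{i<j}A_{ij}\to\bigoplus_kA_k\to M\to 0$, and noting $A_\Gamma\otimes_{H^{*}(BG^{2})}\C=A_\Gamma/\tilde A_\Gamma^{\,W\times W}$ (\cref{strickThm}(4)), gives the second cokernel with the same map. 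The one genuinely new computation is the combinatorial identity $\Delta\setminus(\Gamma\cup\mathrm{supp}(c-\mathbf 1_\Gamma))=\Delta\setminus\mathrm{supp}(c)$ and the ensuing reduction to simplex chain complexes, which is short; the steps I expect to cost the most care are the homological bookkeeping of the first paragraph (getting the purity shifts right so that $d_1$ is literally the degree-$(+2)$ Gysin map, and checking the ``multiply by $x_k$'' description against the injections of \cref{strickThm} and its survival mod $\tilde A_\Gamma^{\,W\times W}$) and the equivariant-formality input for $(X,Z)$.
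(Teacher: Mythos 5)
Your treatment of the first (equivariant) isomorphism is sound and in fact more complete than the paper's own argument. The paper sets up the same Mayer--Vietoris spectral sequence for the closed cover $Z=\bigcup_\rho D_\rho$ and identifies $d_1$ with the signed multiplications by $x_k$ exactly as you do (self-intersection formula plus injectivity of $A_\Gamma \into A_{\Gamma'}$), but it justifies the collapse onto the $|\Gamma|=1$ column only by remarking that everything sits in even degrees --- which by itself does not kill the odd-page differentials $d_3,d_5,\ldots$ into the last column, nor the $E_\infty$-contributions of the deeper columns. Your monomial decomposition $A_\Gamma=\bigoplus_a x^a\,\C[y]^{W_{\Delta\setminus(\Gamma\cup\mathrm{supp}(a))}}$ and the reduction of each $c$-isotypic piece to the chain complex of a simplex actually proves the exactness in sizes $\geq 2$ that the collapse requires. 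Keep that computation.

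The second half has a genuine gap: $M=H^*(Z/G^2,i^!\C_{X/G^2})$ is \emph{not} free over $H^*(BG^2)$ once $G$ has rank $\geq 2$, and the pair is not equivariantly formal --- formality of $X$ and $Z$ separately does not pass to the relative group $H^*_{G^2}(X,G)=H^*_{G^2}(Z,i^!\C)$, because $H^*(G)$ has odd classes. Concretely, freeness would force $H^*(Z,i^!\C_X)=M\otimes_{H^*(BG^2)}\C$ to be concentrated in even degrees, and it is not: a primitive class of $H^{2d-1}(G)$ has weight $2d$, so every positive-degree class of $H^*(G)$ has weight strictly larger than its cohomological degree, so the restriction $H^n(X)\to H^n(G)$ vanishes for all $n>0$ (its image is $W_nH^n(G)=0$); the Gysin sequence then gives $H^{2k+1}(Z,i^!\C_X)\cong H^{2k}(G)$ for $k\geq 1$, which is nonzero as soon as $G$ has two primitive generators (e.g. $H^9(Z,i^!\C_X)\cong\C$ for $G=PGL_3$, coming from the degree-$8$ product of the two exterior generators). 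Equivalently, $\mathrm{Tor}^{H^*(BG^2)}_{\geq 1}(M,\C)\neq 0$, so the reduced complex $\bigoplus_\Gamma A_\Gamma/\tilde A_\Gamma^{\,W\times W}$ is no longer exact in sizes $\geq 2$; note this afflicts the paper's own route for the second isomorphism (rerunning the spectral sequence with parts (3) and (4) of \cref{strickThm}) just as much as your base-change route. As stated, the second cokernel can only describe the image of $\psi$ in $H^*(Z,i^!\C_X)$, and for rank $\geq 2$ the odd-degree classes isomorphic to $H^{\mathrm{even}>0}(G)$ cannot appear in any quotient of $\bigoplus_k A_k/\tilde A_k^{\,W\times W}$; only in the rank-one case $PGL_2$, the one example the paper computes, is there no discrepancy. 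So the missing ingredient is not a cleverer proof of freeness but a restriction of the claim (e.g. to the image of the equivariant classes, which is all that \cref{finale} actually uses).
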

\begin{proof}
Since $H^*(D_\Lambda/G^2, i^!\C_{X/G^2})$ vanishes in odd degrees,
$H^*(Z/G^2, i^!\C_{X/G^2})$ is the cokernel of the map 
\[\varphi: \bigoplus_{i<j} H^*(D_{\{i,j\}}/G^2, i^!\C_{X/G^2}) \to \bigoplus_k
H^*(D_j/G^2, i^!\C_{X/G^2})\]
where $f \in H^*(D_{\{i,j\}}/G^2, i^!\C_{X/G^2})$ is sent to its image in
$H^*(D_{\{j\}}/G^2, i^!\C_{X/G^2})$ minus its image in $H^*(D_{\{i\}}/G^2,
i^!\C_{X/G^2})$.  

We need only to describe $\varphi$ in terms of the 
$A_\Gamma$.  The isomorphism $H^*(D_\Lambda) \to A_\Lambda$ is
obtained by pulling back all the way to $D_\Delta$, which factors through
pullback to $D_\Lambda$.  The composition
\[H^{d-2}(D_{\{ij\}}/G^2)\cong H^{d}(D_{\{ij\}}/G^2,i^!\C)\to H^{d}(D_{\{j\}}/G^2) \to
H^d(D_{\{ij\}}/G^2)\]
is multiplication by $[D_{\{i\}}/G^2]=x_i$ which implies the first claim.

The argument for $H^*(D_{\{j\}},i^!\C_X)$ is similar, using parts (3) and (4) of
\cref{strickThm} to describe $H^*(D_{\{ij\}})$ and $H^*(D_{\{k\}})$.
\end{proof}

In particular there is a canonical surjection $\psi:
\bigoplus_k A_{\{k\}} \to H^*(Z,i^!\C_X)$ and we know its kernel.  Let $\alpha \in H^{2d-1}_{pr}(G)$
and let $p(u_1,\ldots,u_l) \in \C[u_1,\ldots,u_l]^{W}$ be irreducible and
homogeneous of degree $d$.
Let $\beta = p(x_1+y_1, \ldots, x_l+y_l) - (-1)^d p(x_1-y_1,\ldots,x_l-y_l)$.

\begin{proposition}\hspace{2pt}
\label{finale}
\begin{enumerate}
\item $\beta = \sum_{k=1}^l x_kf_k(x,y)$ for some $f_k(x,y) \in
A_{\{k\}}$.
\item  Suppose that $G$ is not of type $D$ or that $2d \neq l$.
Then $\psi((f_k))\sim \res \alpha$.
\end{enumerate}
\end{proposition}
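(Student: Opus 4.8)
My plan is to deduce (1) from a geometric description of $\sum_k x_k A_{\{k\}}$, and (2) from \cref{resLemma} applied to the left $G$-action on $X$ --- for which $G\into X$ is an open immersion of $G$-varieties with complement $Z$ --- together with \cref{edgeProp} to evaluate the edge map.

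For (1), I would first substitute $u_i=(x_i+y_i)/2$ and $v_i=(y_i-x_i)/2$, which rewrites the defining formula as $\beta=2^d\bigl(p(u_1,\dots,u_l)-p(v_1,\dots,v_l)\bigr)$. By \cref{strickThm}(1)--(2) and purity along the codimension-one inclusion $D_{\{k\}}\into X$, the Gysin pushforward $H^*(D_{\{k\}}/G^2,i^!\C)\to H^*(X/G^2)$ is, under the identifications $H^{*-2}(D_{\{k\}}/G^2)\cong A_{\{k\}}$ and $H^*(X/G^2)=A$, multiplication by $x_k$ --- by the same identification used in the proof of \cref{presentation}. Hence the composite $\bigoplus_k H^*(D_{\{k\}}/G^2,i^!\C)\to H^*(Z/G^2,i^!\C)\to H^*(X/G^2)$ sends $(g_k)$ to $\sum_k x_k g_k$; its first arrow is surjective by \cref{presentation}, so by exactness of the Gysin sequence of $Z/G^2\to X/G^2\gets G/G^2$ one gets $\sum_k x_k A_{\{k\}}=\ker\bigl(H^*(X/G^2)\to H^*(G/G^2)\bigr)$. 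Since $G/G^2\cong B(\Delta G)$ with structure map to $BG^2$ induced by the diagonal $\Delta G\into G^2$ --- which sends both $u_i$ and $v_i$ to the same generator of $H^*(B(\Delta G))$ --- and $\beta$ is pulled back from $H^*(BG^2)$ by \cref{strickThm}(3), the image of $\beta$ in $H^*(G/G^2)$ is $2^d(p-p)=0$, so $\beta\in\sum_k x_k A_{\{k\}}$. The resulting $f_k$ are not canonical, but (2) will show $\psi((f_k))$ is independent of the choice.

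For (2), I would argue as follows. First, the image of $\beta$ under $H^*(X/G^2)\to H^*(X/G)$ equals $2^d\,p(u_1,\dots,u_l)$: the summand $p(v_1,\dots,v_l)\in\C[v]^W$ is pulled back from the second factor of $BG^2=BG\times BG$ and so restricts to $0$ along $B(G\times 1)\into BG^2$, whereas $p(u_1,\dots,u_l)$ restricts to $p\in\C[u]^W=H^*(BG)$; as $\beta$ is pulled back from $H^*(BG^2)$, naturality of the structure maps identifies its image in $H^{2d}(X/G)$ with the image of $2^d p\in H^{2d}(BG)$. Let $\rho_\beta\in H^{2d}(Z/G,i^!\C)$ be the inverse-Gysin image of this class. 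Then \cref{resLemma} (with $q=2d-1$, applied to $2^d p$) together with \cref{edgeProp} --- whose hypotheses hold under the assumption on $G$ and $d$, so that $\epsilon_2^{1,2d-1}(p)\sim\alpha$ and $\alpha$ is pinned down up to scalar --- yields
\[ \pi_Z^*\rho_\beta \;=\; \res\!\bigl(\epsilon_2^{1,2d-1}(2^d p)\bigr)\;\sim\;\res\alpha. \]
On the other side, let $\psi'\colon\bigoplus_k A_{\{k\}}\to H^*(Z/G^2,i^!\C)$ be the $G^2$-equivariant analogue of $\psi$ from \cref{presentation}, so that $\psi$ is $\psi'$ followed by pullback along $Z\to Z/G^2$. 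The pushforward computation from (1) shows $\psi'((f_k))$ maps to $\sum_k x_k f_k=\beta$ in $H^{2d}(X/G^2)$; since $H^{2d-1}(G/G^2)=H^{2d-1}(BG)=0$ and $H^{2d-1}(pt)=0$, the lifts of $\beta$ and of its image in $H^{2d}(X/G)$ along their Gysin maps are unique, so $\psi'((f_k))$ \emph{is} that unique lift (which also gives independence of the choice of $f_k$), and its pullback to $H^{2d}(Z/G,i^!\C)$ equals $\rho_\beta$. Applying $\pi_Z^*$ then identifies $\pi_Z^*\rho_\beta$ with $\psi((f_k))$, so $\psi((f_k))\sim\res\alpha$.

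The hard part will be the bookkeeping in (2) that reconciles Strickland's $G^2$-equivariant computations with the $G$-equivariant statement of \cref{resLemma}: one must keep the quotients $X/G$, $X/G^2$ and $G/G^2\cong BG$ and their structure maps straight, confirm that $p(v)$ genuinely vanishes in $H^*(X/G)$, and check that the lifts along the relevant Gysin sequences are unique and compatible under the pullback $Z/G\to Z/G^2$, so that the topologically defined class $\psi((f_k))$ agrees with the pullback of the equivariant lift of $\beta$. Each step is formal once the diagrams are in place, but assembling them correctly is where the care lies; I would also double-check that the stated hypothesis ``$G$ not of type $D$ or $2d\neq l$'' is exactly what forces $\dim H^{2d}(BG)/H^{2d}_{red}(BG)=\dim H^{2d-1}_{pr}(G)=1$ so that \cref{edgeProp} applies.
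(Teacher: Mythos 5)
Your argument is correct, and your part (2) follows essentially the paper's route: restrict $\beta$ to $H^*(X/G)$, identify it up to scalar with the image of $p(u)\in H^{2d}(BG)$ (because $p(v)$ dies along $X/(G\times 1)\to X/G^2 \to BG^2$), feed that class into \cref{resLemma} and \cref{edgeProp}, and observe that $(f_k)$ presents the Gysin lift of $\beta$ whose pullback to $H^{*}(Z,i^!\C)$ is $\psi((f_k))$; your explicit appeal to uniqueness of the lifts (via $H^{2d-1}(BG)=H^{2d-1}(pt)=0$) spells out a compatibility the paper leaves implicit. Where you genuinely diverge is part (1). The paper proves $\beta\in\sum_k x_kA_{\{k\}}$ by direct inspection: $\beta\sim p(u)-p(v)$ is $W\times W$-invariant, hence lies in $A$ by \cref{strickThm}, and it vanishes upon setting all $x_i=0$ (i.e.\ $u=v$), so every term carries some factor $x_i$. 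You instead identify $\sum_k x_kA_{\{k\}}$ with $\ker\bigl(H^*(X/G^2)\to H^*(G/G^2)\bigr)$ via the Gysin sequence together with the surjection of \cref{presentation}, and check that $\beta$ restricts to zero on the open orbit $G/G^2$ (the classifying stack of the diagonal subgroup). Your route is conceptually illuminating --- it explains why $\beta$ was engineered as a difference that cancels on the diagonal, and it yields for free that $\psi((f_k))$ is independent of the chosen decomposition --- but it is a pure existence argument, whereas the paper's algebraic manipulation is what actually produces the $f_k$ needed to carry out the computation in (2) (compare the $PGL_2$ example, where $f_1=4y_1$ is written down). Two small points to tighten: (i) the claim that $u_i$ and $v_i$ restrict to the \emph{same} generator of $H^*(G/G^2)$ deserves a sentence, since both are defined as $c_1$ of the character $\rho_i$ pulled back along $BB\simeq BT\simeq BB^-$, but the paper itself flags a sign asymmetry between $u$ and $v$ under a different restriction in the $PGL_2$ example; (ii) "irreducible" in the hypothesis on $p$ should be read as "not lying in $H^{2d}_{red}(BG)$" so that \cref{edgeProp} applies, as you note at the end.
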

\begin{proof}

Note that $\beta \sim p(u_1,\ldots,u_l) \pm p(v_1,\ldots,v_l) \in \C[\fk{t}\times
\fk{t}]^{W \times W}$.  Therefore it descends to $BG \times BG$, and lies in
$A$.  On the other hand every term of $\beta$ is divisible by some $x_i$.  This
proves the first claim.

In fact the two summands $p(u)$ and $p(v)$ are individually $W \times
W$-invariant, and therefore descend to $\ol{p(u)} \in B(G \times 1)$ and
$\ol{p(v)} \in B(1 \times G)$
respectively.  This implies that $p(v)$ vanishes under pullback along $X/G =
X/(G \times 1) \to X/G^2$.  Thus the pullback of $\beta$
to $\beta' \in H^*(X/G)$ is a scalar multiple of the pullback of $p(u)$.

By \cref{edgeProp} and the remark following,
$\epsilon_2^{1,2d-1}(\ol{p(u)})\sim \alpha$.  By \cref{resLemma}, in order to
compute $\res \alpha$ up to a scalar, it would suffice to express $\beta'$ as
the image of some $\gamma' \in H^*(Z/G,i^!\C)$ and compute the pullback of
$\gamma'$ to $H^*(Z,i^!\C)$.

The expression $\beta = \sum_{k=1}^l x_kf_k(x,y)$ says that $\beta$ is the image of
$\gamma \in H^*(Z/G^2,i^!\C)$, where $\gamma$ is the element represented by
$(f_k) \in \bigoplus_k A_{\{k\}}$.  The pullback of $\gamma$ to $H^*(Z,i^!\C)$
is equal to the pullback of $\gamma'$, so this completes the proof.
\end{proof}

\begin{example}
Let $G=PGL_2$ and $d=2$.  Then
\begin{enumerate}
\item  $X = \pp Mat_{2 \times 2} \cong \pp^3$, where $Mat_{2\times 2}$
denotes the vector space of $2 \times 2$ matrices. 
\item There is a single simple root $\rho$, and $D_\rho = Z \cong \pp^1
\times \pp^1 \subset X$ is the Segre variety.  
\item $\C[\fk{t}]=\C[u_1]$ is a polynomial ring in one variable.
\end{enumerate}

Now let $\alpha \in H^3_{pr}(G)$ and let $p(u_1) = u_1^2$.  This is an
irreducible $W$-invariant polynomial.  Then $\beta = (x_1+y_1)^2 -
(x_1-y_1)^2  = 4x_1y_1$, so $f_1 = 4y_1 = 4(u_1+v_1) \in H^*(Z/G^2)$.
\Cref{finale} says that to
compute $\res \alpha$ we need only pull back $f_1$ to $H^*(Z)$.  

Let $\sigma \in H^*(\pp^1)$ be the hyperplane section.  Under the surjection $H^*(Z/G^2) \to H^*(Z) = H^*(\pp^1) \otimes
H^*(\pp^1)$, $u_1$ is sent to $\sigma \otimes 1$ while $v_1$ is sent to $1
\otimes -\sigma$.  (The apparent asymmetry arises because the second factor of $G
\times G$ acts via $(1,g)\cdot x \to xg\i$.)  So
$\res\,\alpha \sim \left[\{0\} \times \pp^1 \right] - \left[\pp^1 \times
\{0\}\right]$.
\end{example}

\section{Proof of \cref{resLemma}}
\label{proof}

\begin{notation}
By a \emph{bicomplex} we mean an array $\calL = \calL^{\bul,\bul}$ of objects with
anticommuting morphisms $\partial_I: \calL^{p,q} \to \calL^{p+1,q}$ and $\partial_{II}:
\calL^{p,q} \to \calL^{p,q+1}$, and with $\calL^{p,q}=0$ for $p\ll 0$ or $q \ll 0$.
$\calL[i,j]$ is the shifted bicomplex $\calL[i,j]^{p,q} =
\calL^{p+i,q+j}$. $E_n^{p,q}(\calL)$ is the spectral sequence associated to
$\calL$ with the $\partial_{II}$ orientation, so in particular $E_1^{p,q}(\calL) =
H^q(L^{p,\bul})$.  $\tot(\calL)$ is the total complex and $\H(\calL)$ is its 
cohomology.  Write $F_{n+1} \H(\calL) \subseteq F_n
\H(\calL) \subseteq \ldots \subseteq F_0\H(\calL) = \H(\calL)$ for
the filtration associated to the $\partial_{II}$ orientation and
$\epsilon_n^{p,q}(\calL): F_{p}\H^{p+q}(\calL) \to E_n^{p,q}(\calL)$ for the edge
map.  \end{notation}

Let $\fk{A}$ be an abelian
category and let $M$ be a left-exact additive functor from $\fk{A}$ to bounded-below
complexes of abelian groups.  If $\calA$ is a bounded-below complex in $\fk{A}$
one obtains a first-quadrant bicomplex of abelian groups $A = M(\calA)$ by the
rule $A^{p,q} = M(\calA^q)^p$, with $\partial_I: A^{p,q} \to A^{p+1,q}$ the
differential of $M(\calA^q)$ and $\partial_{II}: A^{p,q} \to A^{p,q+1}$ given
by $(-1)^p M(\partial_\calA)_p$ (that is, up to a sign the degree-$p$ component of the chain complex
map $M(\calA^q) \to M(\calA^{q+1})$ induced functorially by the differential of
$\calA$).

Suppose $\calA^\bul$ and $\calB^\bul$ are bounded-below complexes of 
objects from $\fk{A}$.  Let $\bar{f}: \calA \to \calB$ be a map and $\calC$ be its cone.
Name the natural maps as shown:
\[\xymatrix{ 
	\calA \ar[r]^{\bar{f}} & \calB \ar[r]^{\bar{g}} & \calC
	\ar[r]^{\bar{k}} & \calA[1]
}\]

Let $A = M(\calA)$, $B = M(\calB)$ and $C = M(\calC)$.  Then $\bar{f}$,
$\bar{g}$ and $\bar{k}$ induce maps of bicomplexes
\[\xymatrix{ 
	A \ar[r]^{f} & B \ar[r]^{g} & C \ar[r]^{k} & A[0,1]
}\]

Let $P$ be the bicomplex   
\[P^{r,p} = \left\{\begin{array}{cc}
		E_1^{p,q}(A) & r = 3m \\
		E_1^{p,q}(B) & r = 3m+1 \\
		E_1^{p,q}(C) & r = 3m+2 \\
\end{array} \right. \]
with differential $\partial^P_{II}$ inherited from the differentials $(-1)^p\partial_I$ of
$A$, $B$ and $C$, and $\partial^P_{I}$ identical to $f,g$ or $k$. 
On $E_2(P)$ we obtain a differential
\[-\phi: 
\frac{\ker: E_2^{p+1,q}(B) \to E_2^{p+1,q}(C)}{\im: E_2^{p+1,q}(A)\to
E_2^{p+1,q}(B)}
\to 
\frac{\ker: E_2^{p,q+1}(A) \to E_2^{p,q+1}(B)}{\im: E_2^{p,q}(C)\to
E_2^{p,q+1}(A)}
\]
We will give a more concrete description of $\phi$ in \cref{coneLemma}.

Let $\tilde{F}_p\H(A)$ be the preimage under $f$ of $F_p\H(B)$.  Naturality
of $\epsilon_n^{p,q}(-)$ implies

\begin{lemma} \leavevmode
	\begin{enumerate}
	\item The image of $\left(\tilde{F}_{p+1} \H^{p+q+1}(A) \cap
		F_p \H^{p+q+1}(A)\right)$ under $\epsilon_n^{p,q+1}$ is contained in
		$\ker: 	E_2^{p,q+1}(A) \to E_2^{p,q+1}(B)$.  In
		particular, $\epsilon_n^{p,q+1}$ induces a map 
		\[u_n^{p,q+1}: F_p \H^{p+q+1}(A) \cap \tilde{F}_{p+1}
		\H^{p+q+1}(A) \to 
	        \frac{\ker: E_2^{p,q+1}(A) \to E_2^{p,q+1}(B)}{\im:
E_2^{p,q}(C)\to E_2^{p,q+1}(A)} \]
	
	\item The image of $\left(\tilde{F}_{p+1} \H^{p+q+1}(A) \right)$
		under $\epsilon_n^{p+1,q}(B) \circ f$ is contained in
		$\ker: 	E_2^{p+1,q}(B) \to E_2^{p+1,q}(C)$.  In
		particular, $\epsilon_n^{p+1,q}$ induces a map
		\[v_n^{p+1,q}: f\left[\tilde{F}_{p+1} \H^{p+q+1}(A) \right] \to
                \frac{\ker: E_2^{p+1,q}(B) \to E_2^{p+1,q}(C)}{\im:
E_2^{p+1,q}(A)\to E_2^{p+1,q}(B)}\]

\end{enumerate}
\end{lemma}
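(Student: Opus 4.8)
The plan is to deduce both parts from naturality of the edge maps $\epsilon_n^{p,q}(-)$, together with two elementary inputs. The first is that $F_{p+1}\H^{p+q}(-)$ lies in the kernel of every edge map $\epsilon_r^{p,q}(-)$, $r\ge 1$: a total cocycle supported in columns $\ge p+1$ has vanishing column-$p$ leading term. The second is that the composite $\bar g\circ\bar f$ of two consecutive maps in the triangle $\calA\to\calB\to\calC\to\calA[1]$ is null-homotopic, the cone construction supplying the homotopy explicitly. Since the codomains of $u_n$ and $v_n$ are subquotients of the relevant $E_2$ pages, and for $n\ge 2$ the edge map $\epsilon_n^{p,q}(-)$ refines $\epsilon_2^{p,q}(-)$ (its image consists of permanent cocycles), it suffices to run the argument with the $E_2$-valued map $\epsilon_2$; I do so below.

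For part (1), let $\alpha\in F_p\H^{p+q+1}(A)\cap\tilde F_{p+1}\H^{p+q+1}(A)$. By definition of $\tilde F_{p+1}$, $\H(f)(\alpha)\in F_{p+1}\H^{p+q+1}(B)$, so $\epsilon_2^{p,q+1}(B)\bigl(\H(f)(\alpha)\bigr)=0$ by the first input. Naturality of $\epsilon_2^{p,q+1}(-)$ along the morphism of bicomplexes $f\colon A\to B$ then gives
\[
E_2(f)\bigl(\epsilon_2^{p,q+1}(A)(\alpha)\bigr)=\epsilon_2^{p,q+1}(B)\bigl(\H(f)(\alpha)\bigr)=0,
\]
so $\epsilon_2^{p,q+1}(A)(\alpha)\in\ker\bigl(E_2^{p,q+1}(A)\to E_2^{p,q+1}(B)\bigr)$, which is the first assertion. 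Composing further with the projection onto the quotient by $\im\bigl(E_2^{p,q}(C)\to E_2^{p,q+1}(A)\bigr)$ defines $u_n^{p,q+1}$.

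For part (2), let $\alpha\in\tilde F_{p+1}\H^{p+q+1}(A)$, so that $\H(f)(\alpha)\in F_{p+1}\H^{p+q+1}(B)$ and $\epsilon_n^{p+1,q}(B)\circ f$ is defined on $\tilde F_{p+1}\H^{p+q+1}(A)$; as it depends only on $\H(f)(\alpha)$, it descends to $f\bigl[\tilde F_{p+1}\H^{p+q+1}(A)\bigr]$. Because $\bar g\circ\bar f$ is null-homotopic and $M$ is additive, the induced map $g\circ f$ is null-homotopic on total complexes, hence $\H(g\circ f)=0$ and $g\bigl(\H(f)(\alpha)\bigr)=0$. Naturality of $\epsilon_2^{p+1,q}(-)$ along $g\colon B\to C$ gives
\[
E_2(g)\bigl(\epsilon_2^{p+1,q}(B)(\H(f)(\alpha))\bigr)=\epsilon_2^{p+1,q}(C)\bigl(g(\H(f)(\alpha))\bigr)=0,
\]
so $\epsilon_2^{p+1,q}(B)\bigl(\H(f)(\alpha)\bigr)\in\ker\bigl(E_2^{p+1,q}(B)\to E_2^{p+1,q}(C)\bigr)$; composing with the projection onto the quotient by $\im\bigl(E_2^{p+1,q}(A)\to E_2^{p+1,q}(B)\bigr)$ defines $v_n^{p+1,q}$.

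The argument is purely formal, so I do not expect a real obstacle; the two points requiring care are stating naturality of $\epsilon_n^{p,q}(-)$ in the needed generality — for morphisms of first-quadrant bicomplexes of the form $M(\bar h)$, which is standard — and the sign bookkeeping, both in checking that a class of $F_{p+1}\H$ has vanishing column-$p$ leading term and in promoting the canonical null-homotopy of $\bar g\circ\bar f$ to a contraction of $\tot(g\circ f)$ in the presence of the $(-1)^p$ twist built into $\partial_{II}$ of $M(\calA)$.
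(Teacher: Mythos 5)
Your argument is correct and is exactly the route the paper intends: the paper offers no proof beyond the phrase ``Naturality of $\epsilon_n^{p,q}(-)$ implies,'' and your write-up simply supplies the details of that naturality argument (kernel of the edge map contains $F_{p+1}$, plus vanishing of $g\circ f$ on cohomology via the cone). The reduction to $\epsilon_2$ is also consistent with how the lemma is used later, where only $u_2$ and $v_2$ appear.
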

%

\begin{lemma}
\label{coneLemma}
The diagram commutes: 
	\[\xymatrix{
		\tilde{F}_{1}\H^{q+1}(A) \ar[r]^-f \ar[d]^{u_2^{0,q+1}} &
		f\left[\tilde{F}_{1}
		\H^{q+1}(A)\right]\ar[d]^{v_2^{1,q}} \\
		E_2^{0,3q+3}(P) & E_2^{1,3q+1}(P) \ar[l]_\phi
	}\]
\end{lemma}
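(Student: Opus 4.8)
The plan is to make $\phi$ explicit on cochains, compute both legs of the square on a convenient total cocycle for $\alpha$, and exploit the fact that the cone $\calC=\mathrm{Cone}(\bar f)=\calB\oplus\calA[1]$ carries a canonical nullhomotopy of $\bar g\bar f$ which — once $M$ is applied and everything is totalised — drops the chosen representative of $\alpha$ into exactly the slot that $\phi$ reads off. First I would check that $gf$, $kg$ and $fk$ are nullhomotopic column by column, the homotopies being $M$ applied to the evident projections and inclusions of $\calC=\calB\oplus\calA[1]$; this makes $P$ a genuine bicomplex, identifies $\phi$ (up to the stated sign) with the $d_2$ of $P$ for the $\partial_{II}^P$-orientation, and identifies $E_2^{0,3q+3}(P)$, $E_2^{1,3q+1}(P)$ with the subquotients of $E_2^{0,q+1}(A)$ and $E_2^{1,q}(B)$ that appear in the preceding lemma. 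Running the usual $d_2$ zig-zag through $P$ then yields the concrete description of $\phi$ promised earlier: a class of $E_2^{1,3q+1}(P)$ is represented by a $\partial_{II}^B$-cocycle $b_1\in B^{1,q}$ whose class in $E_2^{1,q}(B)$ is killed by $g_*$, and, choosing $\gamma_0\in C^{0,q}$ with $\partial_{II}^C\gamma_0=0$ and $gb_1-\partial_I^C\gamma_0$ a $\partial_{II}^C$-coboundary, $\phi$ carries that class to the class of $[k\gamma_0]\in E_2^{0,q+1}(A)$ modulo the image of $k_*\colon E_2^{0,q}(C)\to E_2^{0,q+1}(A)$.

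Next I would fix representatives. Write $\alpha$ as a total cocycle $a=(a_0,a_1,\dots)$ with $a_p\in A^{p,q+1-p}$; then $\partial_{II}^A a_0=0$ and $u_2^{0,q+1}(\alpha)$ is the class of $[a_0]$. Since $f\alpha\in F_1\H^{q+1}(B)$, represent it by a total cocycle $b=(0,b_1,b_2,\dots)$ and choose $\beta\in\tot(B)^q$ with $fa-b=D\beta$, where $D=\partial_I+\partial_{II}$; then $\epsilon_2^{1,q}(f\alpha)=[b_1]$, so $v_2^{1,q}(f\alpha)$ is the class of $b_1$ and is a legitimate input for the description of $\phi$ just obtained.

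The crux is the choice of $\gamma_0$. Applying $M$ and totalising the nullhomotopy $x\mapsto(0,x)$ of $\bar g\bar f$ yields a degree $-1$ map $S\colon\tot(A)\to\tot(C)$ with $gf=DS+SD$ (up to the $(-1)^p$ signs), and $S$ carries $a_0$ to $\pm(0,a_0)\in B^{0,q}\oplus A^{0,q+1}=C^{0,q}$. Because $Da=0$ one has $gb=g(fa-D\beta)=gfa-Dg\beta=D(Sa-g\beta)$, so $\gamma:=Sa-g\beta$ is an admissible witness (in the sense of the formula for $\phi$ above), with degree-$0$ component $\gamma_0=(-\beta_0,\pm a_0)$ and hence $k\gamma_0=\pm a_0$. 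Feeding this into the formula for $\phi$ gives $\phi\big(v_2^{1,q}(f\alpha)\big)=\pm[a_0]=\pm u_2^{0,q+1}(\alpha)$ modulo $\im k_*$, which is the asserted commutativity up to sign.

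To finish, one pins the overall sign to $+1$: there is nothing subtle here in principle, since every arrow in the square is induced by an honest map of bicomplexes, but it does require bookkeeping of the cone-differential signs, the $(-1)^p$ in $\partial_{II}$, the totalisation sign of $S$, and the ``$-\phi$'' convention. One also records the routine verifications that $[a_0]$, $[b_1]$ and $[\gamma_0]$ genuinely define classes on the pages and in the subquotients named above (for instance $g_*[b_1]=0$ in $E_2^{1,q}(B)$ because $gf$ is nullhomotopic). The hard part, I expect, is the first step: carrying the $d_2$ zig-zag of a bicomplex whose entries are themselves cohomology groups all the way down to honest cochains in $A$, $B$ and $C$, and checking that $P$ is a bicomplex at all. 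Once $\phi$ is written as $[b_1]\mapsto\pm[k\gamma_0]$, the remaining mathematical content is just the one-line identity $k\gamma_0=\pm a_0$.
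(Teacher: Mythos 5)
Your proposal is correct and follows essentially the same route as the paper's proof: both unwind $\phi$ as the cochain-level $d_2$ zigzag of $P$ through the cone $C^{p,q}=A^{p,q+1}\oplus B^{p,q}$, and your witness $\gamma_0$ (the degree-zero component of $Sa-g\beta$) is exactly the paper's lift $\sigma=(a^{0,q+1},-b)$, with the homotopy identity $gf=DS+SD$ replacing the paper's direct check that $\partial_{II}\sigma=0$ and $[\partial_I\sigma]_1=g[f(a^{1,q})-\partial_I b]_1$. The one step you defer, pinning the overall sign to $+1$, is precisely what the minus sign in the definition of $-\phi$ as the $E_2$-differential of $P$ is there to absorb, and the paper settles it by the same explicit bookkeeping of the cone-differential signs that you describe.
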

\begin{proof}
If $x \in A^{i,j}$, write $[x]_1$ for the class that $x$ represents in
$E^{i,j}_1(A)$, $[x]_2$ for the class it represents in $E^{i,j}_2(A)$,
and $[x]_3$ for the class it represents in $E_1^{i,3j}(P)$ (assuming these
classes are well-defined).  Similarly for $x\in B^{ij}$ or $x \in
C^{ij}$.  Write $x =_i y$ for $[x]_i = [y]_i$.

Let $a \in \tilde{F}_{1}\H^{q+1}(A)$, and let $(a^{ij})\in
\bigoplus_{i+j=n} A^{i,j}$ be a representative of $a$ with $a^{i,j}=0$ for
$i<0$.  Then
$\epsilon_2^{0,q+1}(a)=[\alpha^{0,q+1}]_2$.
We have $f(a) \in F_{1}\H(B)$ so 
\[0 = \epsilon_1^{0,q+1}f(a) = [f(a^{0,q+1})]_1\]
This implies that there exists $b \in \calB^{0,q}$ with
$\partial_{II}b = f(a^{0,q+1})$.  Then $f(a)$ is represented by
$f(a^{ij}) - (\partial_I + \partial_{II})b$, and
\[\epsilon_2^{1,q}f(a)=[f(a^{1,q})-\partial_I b]_2\]  

Thus our goal is to show $\phi [f(a^{1,q})-\partial_Ib]_3 =
[a^{0,q+1}]_3$.  We can compute $\phi [f(a^{1,q})-\partial_Ib]_3$ by lifting
$g[f(a^{1,q})-\partial_Ib]_1$ to some $\sigma \in E_1^{0,q}(C)$; then
$[k(\sigma)]_3 = \phi [f(a^{1,q})-\partial_Ib]_3$.

We will prove the lemma by brute computation.  Note that $C^{p,\bul}$ is
identical to the cone of $A^{p,\bul} \to B^{p,\bul}$ and the maps $g$ and $k$
are the natural maps to and from the cone.  Explicitly
\begin{enumerate}[label=(\arabic*)]
	\item $C^{p,q} = A^{p,q+1} + B^{p,q}$ with differentials given
by
\begin{align*}
	\partial_{II}(a,b) = (-\partial_{II}a, (-1)^pf(a) + \partial_{II}b)
	&& 
	\partial_I(a,b) = (\partial_Ia, \partial_Ib)
\end{align*}

\item $g: B \to C$ and $k: C \to A[0,1]$ are given by
\begin{align*}
	g(b) = (0,b) 
	&& 
	k(a,b) = a 
\end{align*}
\end{enumerate}

We claim that $\sigma = [(a^{0,q+1}, -b)]_1$ is a lift of
$g[f(a^{1,q})-\partial_I b]_1$.  We must check two
things:

\begin{enumerate}[label=(\alph*)]
	\item $(a^{0,q+1}, -b)$ is closed under the horizontal
	differential:  By fact (1), 
	\[\partial_{II} (a^{0,q+1}, b) = 
	(\partial_{II} a^{0,q+1}, f(a^{0,q+1})-\partial_{II}b)\]
	Since $(a^{ij})$ was a cocycle for $\tot \calA$ and is concentrated on the $j\geq 0$
	rows, $\partial_{II} a^{0,q+1} = 0$.  By choice of $b$ we have
	$\partial_{II}b = f(a^{0,q+1})$.

\item \textbf{$[\partial_I \sigma]_1 = g[f(a^{1,q})-\partial_I b]_1$:}  
	Write $\gamma = g(f(a^{1,q})-\partial_I b)$. By fact (2), $\gamma= \left(0,
	f(a^{1,q})-\partial_I b \right)$.  By (1), $\partial_{II}\left(
	a^{1,q}, 0 \right) = \left(-\partial_{II}a^{1,q},
	-f(a^{1,q})\right)$.  Then 
	\begin{align*}
		\gamma 
		&= \left( 0, f(a^{1,q})-\partial_I b \right) \\ 
		&=_1 \left(0,f(a^{0,q})-\partial_I b \right) +
		   	\partial_{II}\left(a^{1,q}, 0\right) \\ 
		&=  \left(0, f(a^{1,q})-\partial_I b \right) +
			\left(-\partial_{II}a^{1,q}, -f(a^{1,q})
		\right) \\ 
		&=  \left(-\partial_{II} a^{1,q},-\partial_I b
		\right)
	\end{align*}
Since $a$ was a cocycle, $-\partial_{II} a^{1,q} = \partial_I a^{0,q+1}$.  Then
$ \gamma =_1 \left(\partial_I a^{0,q+1}, -\partial_I b \right) = \partial_I
\sigma$.
\end{enumerate}

Finally, fact (2) says $k(\sigma) = [a^{0,q+1}]_1$, which proves the claim.
\end{proof}

Recall the construction of $\vC(X_\bul, \calF)$ from
\cite[\href{http://stacks.math.columbia.edu/tag/06X2}{Tag
06X2}]{stacks-project}.  
Let $M$ be the functor from sheaves of abelian groups on $X/G$ to complexes of
abelian groups, sending $\calG$ to 
\[\ldots \to \Gamma(X_p, \pi_p\i\calG) \to \Gamma(X_{p+1}, \pi_{p+1}\i \calG)
\to \ldots\]
Then $\vC(X_\bul, \calF)$ is the spectral sequence associated to the double
complex $M(\calI)$, where $\calI^\bul$ is an injective resolution of $\calF$;
from the first page it no longer depends on the choice of $\calI^\bul$. 

\begin{theorem}
\label{resLemma}
Let $q>0$, and let $s: X/G \to BG$ be the quotient of $X \to pt$.  Then the diagram below commutes:
\[\xymatrix{
H^{q+1}(Z/G, i^!\C_{X/G}) \ar[d]  & H^{q+1}(X/G) \ar[l] &
H^{q+1}(BG)\ar[d]^{\epsilon^{1,q}_2} \ar[l]_{s^*} \\
H^{q+1}(Z,   i^!\C_X)         &                    & H^{q}_{\mathrm{pr}}(G) \ar[ll]^{\res}
}\]
\end{theorem}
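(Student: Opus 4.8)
The plan is to deduce \cref{resLemma} from \cref{coneLemma}, applied to the equivariant Gysin triangle on the simplicial scheme $X_\bul$. Take $\fk{A}$ to be the category of abelian sheaves on $X/G$ and $M$ the functor used to build $\vC(X_\bul,-)$. Let $\calI$ be an injective resolution of $\C_{X/G}$; set $\calB=\calI$ and $\calA=i_*i^!\calI$ (the subcomplex of sections supported on $Z/G$), and let $\bar f\colon\calA\to\calB$ be the ``forget supports'' inclusion, so its cone $\calC$ is a model for $Rj_*j\i\C_{X/G}$ and $\bar g,\bar k$ are restriction to the open substack $G/G=pt$ and the boundary map. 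With $A=M(\calA)$, $B=M(\calB)$, $C=M(\calC)$ as in the excerpt, one reads off: $\H^n(A)=H^n(Z/G,i^!\C_{X/G})$ and $E_1^{p,q}(A)=H^q(Z_p,i_p^!\C_{X_p})$; $\H^n(B)=H^n(X/G)$ with $E_2^{p,q}(B)=\vC_2^{p,q}(X_\bul,\C)$; and, since $G/G=pt$, $\H^n(C)=H^n(pt)$ with $E_1^{p,q}(C)=H^q(G_p,\C)$. Because $G_\bul$ is the \v Cech nerve of $G\to pt$ and $G\to pt$ has a section, $G_\bul$ has an extra degeneracy, so $E_2^{0,q}(C)=H^q(pt)$ and $E_2^{p,q}(C)=0$ for $p>0$. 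Finally: $f$ induces the Gysin map of the equivariant Gysin sequence, $g$ induces restriction, the $p=0$ slice of the entire triangle \emph{is} the non-equivariant Gysin triangle for $G\subset X$, so $k$ on the bottom edge $E_1^{0,q}(C)=H^q(G)\to E_1^{0,q+1}(A)=H^{q+1}(Z,i^!\C_X)$ is $\res$, and the edge map $\epsilon_2^{0,q+1}(A)$ on $\H^{q+1}(A)$ is the pullback $\pi_Z^*$.

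Fix $\gamma\in H^{q+1}(BG)$ and put $\alpha=\epsilon_2^{1,q}(\gamma)\in H^q_{\mathrm{pr}}(G)$. For $q>0$ we have $H^q(pt)=H^{q+1}(pt)=0$, so $f$ is an isomorphism in degree $q+1$; let $a\in H^{q+1}(Z/G,i^!\C_{X/G})$ be the unique class with $f(a)=s^*\gamma$, i.e.\ $a$ is $\mathrm{Gysin}^{-1}(s^*\gamma)$. The composite $X\to X/G\xrightarrow{\,s\,}BG$ factors through $pt$ (by the defining square of $s$), so $s^*\gamma$ pulls back to $0$ in $H^{q+1}(X)$; since $F_1\H^{q+1}(B)$ is exactly the kernel of that pullback, $s^*\gamma\in F_1\H^{q+1}(B)$, and hence $a\in\tilde F_1\H^{q+1}(A)$. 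Now \cref{coneLemma} applies and gives $u_2^{0,q+1}(a)=\phi\bigl(v_2^{1,q}(s^*\gamma)\bigr)$ in $E_2^{0,3q+3}(P)$; the denominator $\im\bigl(E_2^{0,q}(C)\to E_2^{0,q+1}(A)\bigr)$ of that group vanishes because $E_2^{0,q}(C)=H^q(pt)=0$, so the identity already lives in $E_2^{0,q+1}(A)\subseteq H^{q+1}(Z,i^!\C_X)$, where its left-hand side is $\epsilon_2^{0,q+1}(A)(a)=\pi_Z^*(a)=\pi_Z^*\,\mathrm{Gysin}^{-1}(s^*\gamma)$ — the left-hand composite of the theorem.

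It remains to identify $\phi\bigl(v_2^{1,q}(s^*\gamma)\bigr)$ with $\res\,\alpha$, and this is the only place primitivity is used. By naturality of the \v Cech spectral sequence along $s_\bul\colon X_\bul\to BG_\bul$, the class $\epsilon_2^{1,q}(\vC(X_\bul,\C))(s^*\gamma)$ is represented by the pullback of $\alpha$ along $(s_\bul)_1\colon X_1\to BG_1$. Following the recipe for $\phi$ from the proof of \cref{coneLemma}, one pushes this forward by $g$ to $E_1^{1,q}(C)=H^q(G_1)$, where it becomes the pullback of $\alpha$ along the composite $G_1\hookrightarrow X_1\xrightarrow{(s_\bul)_1}BG_1=G$; one then lifts that element back along the \v Cech differential $\partial_I^C\colon H^q(G_0)\to H^q(G_1)$ and applies $k$. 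Since $\partial_I^C$ is the alternating sum of the two projections $G\times G\to G$ (the face maps of $G_\bul$), the primitivity relation $\mu^*\alpha=\pi_0^*\alpha+\pi_1^*\alpha$ — equivalently $\alpha(gh)=\alpha(g)+\alpha(h)$, which for $q>0$ also forces $\alpha(g^{-1})=-\alpha(g)$ — says precisely that $\sigma:=\alpha\in H^q(G_0)=H^q(G)$ is such a lift. Hence $\phi\bigl(v_2^{1,q}(s^*\gamma)\bigr)=k(\sigma)=k(\alpha)$, and $k$ on the bottom edge is $\res$, so this equals $\res\,\alpha$. Combining with the previous paragraph, $\pi_Z^*\,\mathrm{Gysin}^{-1}(s^*\gamma)=\res\,\epsilon_2^{1,q}(\gamma)$, which is the commutativity asserted in \cref{resLemma}.

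The conceptual content is slight — essentially the single observation that primitivity makes $\sigma=\alpha$ a preimage under $\partial_I^C$ — so I expect the real work to be bookkeeping: checking that the $p=0$ slice of the simplicial triangle really is the non-equivariant Gysin triangle and that $f$, $g$, $k$ restrict on the relevant pages to the Gysin map, restriction, and $\res$ \emph{with signs and orientations matching \cref{coneLemma} exactly}; verifying the hypotheses $a\in\tilde F_1\H^{q+1}(A)$ and $E_2^{0,q}(C)=0$; and keeping the bicomplex shifts and the $(-1)^p$'s in $P$ straight. I expect matching these sign conventions, rather than anything conceptual, to be the main obstacle.
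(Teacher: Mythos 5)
Your proposal is correct and follows essentially the same route as the paper's proof: the same choice of $\calA\to\calB\to\calC$ modelling $i_*i^!\C_{X/G}\to\C_{X/G}\to j_*j\i\C_{X/G}$, the same appeal to \cref{coneLemma} together with the vanishing of $\vC_2^{p,q}(G_\bul,\C)$ off the origin, and the same final step where primitivity exhibits $\alpha$ itself as a lift along the \v{C}ech differential so that $k(\alpha)=\res\alpha$. The only deviations are cosmetic (arguing $s^*\gamma\in F_1$ via the factorization of $X\to BG$ through $pt$ rather than via naturality of $\epsilon_2^{0,q+1}$, and writing the face maps of $G_\bul$ in \v{C}ech-nerve rather than action-groupoid coordinates, where they are $\mu$ and $\pi_1$ rather than the two projections).
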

\begin{proof}

The edge maps $\epsilon_2^{0,q+1}$ are natural transformations, and
$\epsilon_2^{0,q+1}(BG_\bul): H^{q+1}(BG) \to H^{q+1}(pt)$ is the zero map, so 
$H^{q+1}(BG) \to H^{q+1}(X/G)$ factors
through $F_1H^{q+1}(X/G)$.  Therefore it would suffice to show commutativity of
\[\xymatrix{
\tilde{F}_1H^{q+1}(Z/G, I^!\C) \ar[d]  & F_1H^{q+1}(X/G) \ar[l]_-\cong &
H^{q+1}(BG)\ar[d]^{\epsilon^{1,q}_2} \ar[l]_-{s^*} \\
H^{q+1}(Z,   i^!\C)         &                           &
H^{q}_{\mathrm{pr}}(G) \ar[ll]^-{\res}
}\]

Let $\calA$ be a complex of injectives quasi-isomorphic to
$i_*i^!\C_{X/G}$ and $\calB$ be a complex of injectives quasi-isomorphic
to $\C_{X/G}$.  Let $\bar{f}: \calA \to \calB$ be the natural map
$i_*i^!\C_{X/G} \to \C_{X/G}$ and let $\calC$ be its cone.  Then $\calC$ is a
complex of injectives quasi-isomorphic to $j_*j\i\C_{X/G}$.  We will apply 
\cref{coneLemma} to $\calA \to \calB \to \calC$.  Note that in this case
$E_2^{i,j}(C)=0$ unless $i=j=0$.  In particular the domain of $\phi$ is a
quotient of $\vC_2^{1,q}(X_\bul, \C_X)$ and the codomain of
$\phi$ is a subspace of $\vC_0^{0,q+1}(Z_\bul, i^!\C_X) \subset H^{q+1}(Z,
i^!\C_X)$.  The commuting diagram of \cref{coneLemma} implies
commutativity of the simpler diagram
\[\xymatrix{
\tilde{F}_1 H^{q+1}(Z/G, i^!\C_{X/G}) \ar[r] \ar[d]  
  &F_1 H^{q+1}(X/G)                      \ar[d] \\
H^{q+1}(Z, i^!\C_X) 
 &  \ar[l]^{\varphi}  \vC^{1,q}_2(X_\bul, \C) \ar[l]
}\]
where we have written $\varphi$ for the composition 
\[ 
\vC^{1,q}_2(X_\bul, \C)\to \frac{\vC^{1,q}_2(X_\bul, \C)}{\vC_2^{1,q}(Z_\bul, i^!\C_X)} 
\xymatrix{\ar[r]^\phi &}  \ker\left[ \vC_2^{0,q+1}(Z_\bul, i^!\C_X) \to
\vC_2^{0,q+1}(X_\bul, \C_X)\right] \into H^{q+1}(Z, i^!\C_X)\]
 It suffices to show that
$\res: H^{q}_{\mathrm{pr}}(G) \to H^{q+1}(Z,   i^!\C)$ is equal to the
composition
\[ \xymatrix{H^{q}_{\mathrm{pr}}(G) \ar[r]^-= &
\vC_2^{1,q}(BG_\bul, \C) \ar[r]^-{s^*_1} & 
\vC_2^{1,q}(X_\bul, \C) \ar[r]^-\varphi &
\vC_2^{0,q+1}(X_\bul, i_*i^! \C) }
\]
Here $s_\bul: X_\bul \to BG_\bul$ is the map induced by $s$, so in particular $s_1: G \times
X \to G$ is the projection.

Let $\alpha \in H^{q+1}_{pr}(G)$.  We will compute $\varphi (s_1^*\alpha)$
referring to the diagram below: 
\[\xymatrix{
                             &				&			                      &					  &                     \\
H^q(G \times Z, i^!\C) \ar[r]\ar[u] & H^q(G \times X) \ar[r]^{j_1^*} \ar[u] &
H^q(G \times G) \ar[r] \ar[u]               & H^{q+1}(G \times Z, i^!\C) \ar[r]
\ar[u] & H^{q+1}(G \times X) \ar[u] \\
H^q(Z, i^!\C) \ar[r]\ar[u] & H^q(X) \ar[r]         \ar[u] & H^q(G) \ar[r]^\res
\ar[u]^{\partial} & H^{q+1}(Z, i^!\C) \ar[r] \ar[u] & H^{q+1}(X) \ar[u] \\
0 \ar[u]                     &  0 \ar[u]                      & 0 \ar[u]                              & 0 \ar[u]                          & 0 \ar[u] 
}\]
The class $\varphi(s_1^*\alpha)$ is represented by any $\beta \in H^{q+1}(Z,
i^!\C)$ that lifts to $\sigma \in H^q(G)$ with $\partial\sigma =
j_1^*s_1^*\alpha$, so we need to show that $\beta = \res\, \alpha$ works.  Then
$\beta$ lifts to $\alpha \in H^q(G)$, and $\partial \alpha = \mu^*\alpha -
\pi_1^*\alpha$, where $\mu: G \times G \to G$ is the multiplication and $\pi_0,
\pi_1: G \times G \to G$ are the projections.  Since $\alpha$ is primitive
$\mu^*\alpha = \pi_0^*\alpha + \pi_1^*\alpha$ and $\partial \alpha =
\pi_0^*\alpha$.  As $s_1 \circ j_1 = \pi_0$ this proves the claim.  
\end{proof}

\section{Computation of the edge map for $BG$}\label{edgeCalc}

Here we explain how to compute the edge map $\epsilon_2^{1,q}:
H^{q+1}(BG) \to H^q(G)$ coming from the \v{C}ech spectral sequence
$\vC^{p,q}_n(BG_\bul, \C)$.  This is a
repackaging of work of Bott \cite{bott}.  We assume that $G$ is connected and semisimple.
Let $\fk{g}$ be the Lie algebra of $G$.  Our explicit models of $H^*(BG)$ and
$H^*(G)$ are respectively the ring of invariant
polynomials $\C[\fk{g}]^G = \left( \sym^\bul \fk{g}^\vee[2] \right)^G$ and the
cohomology of the Chevalley-Eilenberg complex
$\fk{C}(\fk{g}^\vee)$ of $\fk{g}$.  The precise identifications are explained
in \cref{tauDef} and \cref{models}.

\begin{definition}
Denote by $\Omega^q_X$ the sheaf of smooth complex-valued $q$-forms on
$X$.  The \emph{\v{C}ech-de Rham} bicomplex $\dR^{\bul,\bul}_0(X_\bul, \C)$ is the
bicomplex with $\dR^{p,q}_0(X_\bul, \C) = \Gamma(X_p, \Omega^q)$.  The
differential $\partial_{II}$ is $(-1)^p$ times the de Rham differential
of $X_p$, and the differential $\partial_I$ is the alternating sum of
pullbacks along the projections $X_{p+1} \to X_p$.  Write
$\dR_\bul(X_\bul, \C)$ for the associated spectral sequence. 
\end{definition}

\begin{notation}
If $M^{p,q}$ is a cosimplicial complex (where $p$ is the ``simplicial
coordinate'' and $q$ is the ``chain complex coordinate''), with differential
$\delta: M^{p,q} \to M^{p,q+1}$ and coface maps $\phi^{p,q}_i: M^{p,q} \to
M^{p+1,q}$, $i=0,1,\ldots,p$,  let $\kk M$ be the associated (anticommuting)
bicomplex defined by $(\kk M)^{p,q} = M^{p,q}$ with differentials 
\[
\partial^{p,q}_I(m) = \sum_{i=0}^p (-1)^i \phi_i^{p,q}(m) \hspace{10mm}
\partial^{p,q}_{II}(m) = (-1)^p \delta(m)  
\] 
Write $H^*M$ for the cohomology of
the total complex $\tot (\kk M)$ of $\kk M$.  We identify cosimplicial modules
with cosimplicial complexes concentrated in degree $0$. 

 Let $\Gamma_\Delta(X_\bul, -)$ be the functor sending sheaves of abelian groups on
$X_\bul$ to cosimplicial modules by the rule $\Gamma_\Delta(X_\bul,\calF)_p =
\Gamma_\Delta(X_p, \calF)$.  
\end{notation}

Since Bott's results use the \v{C}ech-de Rham spectral sequence rather than the
\v{C}ech spectral sequence, we need to compare them.  We will do so via a
third spectral sequence studied by Friedlander 
\cite[Proposition 2.4]{friedlander}.  The following lemma and corollary
are probably well known.

\begin{lemma}\label{simpCoh}
Let $\calI$ be an injective resolution of $\C_{X/G}$.  There is a bicomplex
$Q^{p,q}$ with maps 
$\vC_0(X_\bul, \calI) \to Q  \gets \dR_0(X_\bul, \C)$
inducing isomorphisms $\vC_n(X_\bul, \calI) \to E_n(Q) \gets
\dR_n(X_\bul, \C)$ for all $n \geq 1$. 
 \end{lemma}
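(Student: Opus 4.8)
The plan is to construct the intermediate bicomplex $Q$ so that it receives a map from each of $\vC_0(X_\bul,\calI)$ and $\dR_0(X_\bul,\C)$ in a way that is ``sheaf-theoretic in each simplicial degree'', and then to check that both maps are $E_1$-isomorphisms by appealing to the fact that on each fixed scheme $X_p$ the \v{C}ech resolution, the de Rham resolution, and whatever resolution defines $Q$ all compute $H^*(X_p,\C)$. Concretely, I would take $Q$ to come from a bifunctorial resolution: let $\calG^\bul$ be a resolution of $\C_{X/G}$ on $X/G$ (the natural candidate is Godement, or the Thom--Whitney / Friedlander construction referenced from \cite{friedlander}) that is simultaneously a target for maps out of $\calI$ and out of $\Omega^\bul_{X/G}$ (using that $\Omega^\bul$ is a resolution of $\C$ and $\calI$ is injective, both map to $\calG^\bul$ uniquely up to homotopy). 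Then set $Q^{p,q} = \Gamma(X_p, \pi_p^{-1}\calG^q)$ with the same two differentials as in the definitions of $\vC_0$ and $\dR_0$: $\partial_I$ the alternating sum of pullbacks along the simplicial face maps, $\partial_{II}$ the $(-1)^p$-twisted internal differential of $\calG^\bul$.

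First I would record that the maps of complexes of sheaves $\calI \to \calG^\bul \gets \Omega^\bul$ are degreewise maps over $X/G$, hence after applying $\Gamma(X_p,\pi_p^{-1}(-))$ in each simplicial degree $p$ and assembling, they give maps of bicomplexes $\vC_0(X_\bul,\calI) \to Q \gets \dR_0(X_\bul,\C)$; the sign conventions match because in all three cases $\partial_{II}$ is $(-1)^p$ times an internal differential and $\partial_I$ is the same simplicial alternating sum, which is natural in the sheaf. Second, I would compute $E_1$. Fixing $p$, the $p$-th column of each bicomplex is $\Gamma(X_p,\pi_p^{-1}(-))$ applied to a resolution of the constant sheaf $\C_{X_p}$ — for $\vC_0$ this is the definition via $\calI$ pulled back, for $\dR_0$ it is the de Rham complex of $X_p$, for $Q$ it is $\pi_p^{-1}\calG^\bul$. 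All three compute $H^q(X_p,\C)$, and the comparison maps are exactly the maps on hypercohomology induced by $\calI \to \calG^\bul \gets \Omega^\bul$ after pullback; since each of these is a quasi-isomorphism of complexes of sheaves on $X_p$ (pullback is exact, so $\pi_p^{-1}$ preserves quasi-isomorphisms), the induced maps on $H^q(X_p,-)$ are isomorphisms. Hence $\vC_1(X_\bul,\calI) \to E_1(Q) \gets \dR_1(X_\bul,\C)$ are isomorphisms in every bidegree, and therefore isomorphisms on all later pages $E_n$, $n\geq 1$, by the standard comparison theorem for spectral sequences.

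The one subtlety — and the step I expect to be the main obstacle — is producing the resolution $\calG^\bul$ with the right functoriality: I need a single complex of sheaves on $X/G$ (i.e.\ on the simplicial scheme $X_\bul$, or equivalently an $\calO$- or abelian-sheaf resolution on the stack) into which both $\calI$ and the de Rham complex map compatibly, and such that $\Gamma(X_p,\pi_p^{-1}(-))$ has the expected behaviour. Using injectivity of $\calI$ gives the map $\calI \to \calG^\bul$ for free once $\calG^\bul$ is a resolution of $\C$; the map $\Omega^\bul_{X/G} \to \calG^\bul$ exists because $\Omega^\bul$ is a resolution of $\C$ and one can lift along the quasi-isomorphism $\C \to \calG^\bul$ into an injective (or flasque/Godement) target — the Godement resolution is the cleanest choice since it is functorial and computes sheaf cohomology on each $X_p$ after pullback. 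I would cite \cite[Proposition 2.4]{friedlander} for the fact that this ``sheaf cohomology on the simplicial scheme'' construction computes $H^*(X/G,-)$ and agrees with $\vC$, which handles the compatibility of the abutments and lets me conclude. The remaining bookkeeping — checking signs in $\partial_{II}$, checking that the face-map pullbacks commute with the resolution maps — is routine and I would not spell it out in full.
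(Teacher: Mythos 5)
Your overall strategy is the paper's: sandwich both bicomplexes into a third one built from a resolution, and win by comparing $E_1$-terms columnwise. But there is a genuine gap in where your intermediate resolution lives, and it breaks the de Rham half of the comparison. You take $\calG^\bul$ to be a resolution of $\C_{X/G}$ \emph{on $X/G$} and set $Q^{p,q}=\Gamma(X_p,\pi_p^{-1}\calG^q)$; you then assert that the parenthetical ``on $X/G$, i.e.\ on the simplicial scheme $X_\bul$'' is an equivalence. It is not: sheaves on $X/G$ correspond only to the \emph{cartesian} sheaves on $X_\bul$ (those whose comparison maps along the face maps are isomorphisms), and the de Rham sheaves are precisely not of this kind. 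Since $\pi_p$ is smooth of positive relative dimension, $\pi_p^{-1}\Omega^q_{X/G}\neq\Omega^q_{X_p}$ (there is only a map $\pi_p^*$, not an identification), so $\dR_0^{p,q}(X_\bul,\C)=\Gamma(X_p,\Omega^q_{X_p})$ is not of the form $\Gamma(X_p,\pi_p^{-1}(\text{anything on }X/G))$, and no map of complexes over $X/G$ can induce the required map $\dR_0(X_\bul,\C)\to Q$ by applying $\Gamma(X_p,\pi_p^{-1}(-))$. If you instead try to choose quasi-isomorphisms $\Omega^\bul_{X_p}\to \pi_p^{-1}\calG^\bul$ one column at a time, they exist only up to homotopy, and making them strictly commute with the alternating-sum differential $\partial_I$ is exactly the coherence problem you dismiss as ``routine bookkeeping.'' The fix is the one the paper takes from Friedlander: work in the abelian category of sheaves on the simplicial scheme $X_\bul$, where both $\Omega^\bul_{X_\bul}$ (non-cartesian) and $\pi_\bul^{-1}\calI^\bul$ are honest resolutions of $\C_{X_\bul}$; take an injective resolution $J$ of $\C_{X_\bul}$ \emph{there}, set $Q=\Gamma_\Delta(X_\bul,J)$, and map both resolutions into $J$ using injectivity of $J$. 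The $E_1$ comparison then goes through as you describe, using that each $J_p$ is injective on $X_p$, each $\pi_p^{-1}\calI^\bul$ is injective, and each $\Omega^\bul_{X_p}$ is $\Gamma(X_p,-)$-acyclic.

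A secondary error with the same root: you claim ``injectivity of $\calI$ gives the map $\calI\to\calG^\bul$ for free.'' Injectivity of $\calI$ produces maps \emph{into} $\calI$, not out of it; to map $\calI$ (or $\Omega^\bul$) into the intermediate object you need the \emph{target} to be injective (or otherwise to have the relevant lifting property), which is again why the paper's $J$ is taken injective on $X_\bul$ and both arrows point into it.
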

\begin{proof}
Following \cite{friedlander}, we work with sheaves on the simplicial
scheme $X_\bul$. For $q$ fixed, the collection of sheaves $\Omega^q_{X_p}$
defines such a sheaf.  $\Omega^\bul_{X_\bul}$ defines a complex of
sheaves on $X_\bul$ resolving the constant sheaf $\C_{X_\bul}$.  Similarly $\pi_p\i
\calI^q$ defines a complex of sheaves on $X_\bul$ resolving $\C_{X_\bul}$.

Let $J^\bul$ be an injective complex of sheaves on $X_\bul$ resolving
$\C_{X_\bul}$, so that $\Gamma_\Delta(X_\bul, J)$ defines a cosimplicial
complex.  Let $Q^{p,q} = \kk \Gamma_\Delta(X_\bul, J)$.  The identity morphism
of $\C_{X_\bul}$ induces maps of resolutions $x': \Omega^{\bul}_{X_\bul} \to J$
and $y': \pi_\bul\i \calI^\bul \to J$, and taking global sections in each
simplicial degree we obtain $x$ and $y$. 

Each $J^\bul_p$ is an injective resolution of $X_{\C_p}$ (see the first
paragraph of the proof of \cite[Proposition 2.4]{friedlander}).
$\pi_p\i\calI^\bul$ is an injective resolution of $\C_X$ (see the proof of
\cite[\href{http://stacks.math.columbia.edu/tag/06XF}{Tag 06XF}]{stacks-project}) and
$\Omega^\bul_{X_p}$ is a $\Gamma(X_p, -)$-acyclic resolution of $\C_X$, so $x'$
and $y'$ induce quasi-isomorphisms $\Gamma(X_p, \pi_p\i\calI) \to \Gamma(X_p,
J_p)$ and $\Gamma(X_p, \Omega_p^\bul) \to \Gamma(X_p, J_p)$.  In other words,
$x$ and $y$ induce isomorphisms on $E_1$.  Therefore they induce isomorphisms
on all later pages.
\end{proof}

\begin{corollary}
The edge maps $\epsilon_n^{p,q}: F_pH^{p+q}(X/G, \C) \to H^q(X_p, \C)$
obtained from the \v{C}ech spectral sequence and the \v{C}ech-de Rham
spectral sequence are identical for $n \geq 1$.
\end{corollary}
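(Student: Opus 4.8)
The plan is to deduce the Corollary directly from \cref{simpCoh} by a naturality argument: the edge maps are intrinsic to a spectral sequence, so once we know that the maps $x$ and $y$ of \cref{simpCoh} induce isomorphisms on every page $E_n$ with $n \geq 1$, it remains only to check that these isomorphisms are \emph{compatible with the edge maps}, i.e. that they intertwine $\epsilon_n^{p,q}(\vC_0(X_\bul,\calI))$, $\epsilon_n^{p,q}(Q)$ and $\epsilon_n^{p,q}(\dR_0(X_\bul,\C))$. Since an isomorphism of spectral sequences (from some page onward) induced by an actual map of bicomplexes automatically respects the abutment filtration and the edge maps, the only real content is to identify the abutments: all three bicomplexes must compute $H^{p+q}(X/G,\C)$, and under \cref{simpCoh} the induced maps on total cohomology must be the canonical isomorphisms (in particular, the identity on $H^*(X/G,\C)$).

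First I would recall that for a map of first-quadrant bicomplexes that is an isomorphism on $E_1$, the induced map on total cohomology is an isomorphism, it is strictly compatible with the filtrations $F_\bullet\H$, and the induced maps on $\mathrm{gr}_p\H$ agree with the maps on $E_\infty^{p,\bullet}$; consequently the edge maps $\epsilon_n^{p,q}: F_p\H^{p+q} \to E_n^{p,q}$ (for all $n\geq 1$, including $n=\infty$) commute with the maps on filtered cohomology and on pages. I would apply this to both $x: \vC_0(X_\bul,\calI) \to Q$ and $y: \dR_0(X_\bul,\C) \to Q$. This gives a commuting square of edge maps for each, relating $\epsilon_n^{p,q}(\vC_0)$ and $\epsilon_n^{p,q}(Q)$, and likewise $\epsilon_n^{p,q}(\dR_0)$ and $\epsilon_n^{p,q}(Q)$; composing, $\epsilon_n^{p,q}(\vC_0)$ and $\epsilon_n^{p,q}(\dR_0)$ are identified once we know the two isomorphisms $\H(\vC_0) \to \H(Q) \gets \H(\dR_0)$ on abutments agree with the standard identifications $H^*(X/G,\C) \xrightarrow{\sim} \H(Q) \xleftarrow{\sim} H^*(X/G,\C)$.

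The second step is therefore to pin down the abutment identifications. Here I would use that $J^\bullet$ is an injective resolution of $\C_{X_\bul}$ on the simplicial scheme, so $\H(Q) = \mathbb{H}^*(X_\bul, \C_{X_\bul}) \cong H^*(X/G,\C)$ by Friedlander's comparison, and that the quasi-isomorphisms $x'$ and $y'$ of resolutions of $\C_{X_\bul}$ both cover the identity map of $\C_{X_\bul}$; hence the induced maps on hypercohomology are both the canonical isomorphism, so $\H(x)$ and $\H(y)$ are compatible with identifying both abutments with $H^*(X/G,\C)$ (for $\vC_0$ this is the statement that $\vC(X_\bul,\calI)$ computes $H^*(X/G,-)$, and for $\dR_0$ it is the de Rham theorem on each $X_p$ assembled simplicially). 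I expect the main obstacle to be bookkeeping rather than mathematics: one must be careful that the sign conventions in the definition of $\kk(-)$ and in the passage $M(\calA)^{p,q} = M(\calA^q)^p$ are consistent across $\vC_0$, $Q$ and $\dR_0$ so that $x$ and $y$ are genuine maps of \emph{anticommuting} bicomplexes, and that ``edge map'' is being used in the same normalization ($F_p\H^{p+q} \to E_n^{p,q}$, the projection onto the subquotient) in all three cases; granting the conventions of the \texttt{Notation} block, this is routine. I would close by remarking that the same argument identifies the full filtrations, not just the edge maps, so in particular $F_pH^{p+q}(X/G,\C)$ is the same subspace whether computed \v{C}ech-theoretically or via \v{C}ech--de Rham.
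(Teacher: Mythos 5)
Your argument is correct and is exactly the naturality argument the paper leaves implicit: the corollary is stated without proof as an immediate consequence of Lemma \ref{simpCoh}, relying on the fact that maps of bicomplexes inducing isomorphisms on $E_1$ are compatible with filtrations, edge maps, and the identification of abutments. Your write-up just makes explicit the bookkeeping (compatibility of $\H(x)$, $\H(y)$ with the canonical identifications of the abutment with $H^*(X/G,\C)$) that the paper takes for granted.
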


This reduces the problem to computing the edge map for the \v{C}ech-de Rham
spectral sequence. 

\begin{notation}  
If $M$ is a module, let $CM$ be the cosimplicial module $C^p
M = M^{\oplus p+1}$, where the coface maps are defined by inserting
zeros (for example $(m_0, \ldots, m_p) \to (m_0, \ldots, m_i, 0,
m_{i+1}, \ldots, m_p)$).  Let $\Sigma M \subset CM$ be the
sub-cosimplicial module defined by $\Sigma^p M = \{(m_0, \ldots, m_p) |
\sum_i m_i = 0\}$.

Let $\delta: \fk{g}^\vee \to \wedge^2 \fk{g}^\vee$ be the dual of the Lie
bracket.  This extends using the Leibniz rule to $\delta: \wedge^d \fk{g}^\vee
\to \wedge^{d+1} \fk{g}^\vee$.  The \emph{Chevalley-Eilenberg} complex
$\fk{C}\fk{g}^\vee$ of $\fk{g}$ is defined by $\fk{C}^d \fk{g}^\vee =
\wedge^d\fk{g}^\vee$ with differential $\delta$.  We consider $\fk{C}$ to be a
covariant functor from Lie coalgebras to chain complexes.
\end{notation}

There is a natural cosimplicial Lie coalgebra structure on $\Sigma
\fk{g}^\vee$.  Applying $\fk{C}$ we obtain a cosimplicial chain complex
$\fk{C}\left(\Sigma\fk{g}^\vee\right)$.  

\begin{definition} \label{tauDef}
Identifying elements of
$\fk{C}^q(\Sigma^p \fk{g}^\vee )$ with left-invariant $q$-forms on $BG_p$
\cite[decomposition lemma]{bott}
defines a map of cosimplicial chain complexes $\tau: \fk{C}^\bul(\Sigma^\bul
\fk{g}^\vee) \to \dR(BG_\bul, \C)$.
\end{definition}

\begin{lemma} \label{comparison}
	(Bott) $\tau$ induces an isomorphism $H^*
	\fk{C}(\Sigma\fk{g}^\vee) \to \H^*\dR(BG_\bul, \C)$ 
\end{lemma}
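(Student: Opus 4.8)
The plan is to verify the isomorphism pagewise on the spectral sequences, since both sides of $\tau$ carry compatible filtrations (the simplicial filtration). On the $E_1$ page of $\dR(BG_\bul,\C)$ we have $\dR_1^{p,q}(BG_\bul,\C) = H^q_{\mathrm{dR}}(BG_p) = H^q(G^p)$, while on the source the $E_1$ page computes, in simplicial degree $p$, the cohomology of the Chevalley--Eilenberg complex $\fk{C}(\Sigma^p\fk{g}^\vee) = \fk{C}((\fk{g}^\vee)^{\oplus p})$. Since $\Sigma^p\fk{g}^\vee$ is the Lie coalgebra of $G^p$ (the product of $p$ copies of $G$), and since for a compact or reductive group the Chevalley--Eilenberg cohomology $H^*\fk{C}(\fk{h}^\vee)$ computes $H^*(H,\C)$ via left-invariant forms, the classical van Est / Chevalley--Eilenberg theorem identifies $H^*\fk{C}(\Sigma^p\fk{g}^\vee) \cong H^*(G^p) = \dR_1^{p,\bul}(BG_\bul,\C)$. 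First I would check that $\tau$ on $E_1$ is exactly this classical comparison map in each simplicial degree $p$, which reduces to Bott's decomposition lemma (cited in \cref{tauDef}): the point is that the left-invariant forms on $BG_p = G^p$ are exactly the image of $\fk{C}^\bul((\fk{g}^\vee)^{\oplus p})$, and the de Rham differential restricted to left-invariant forms is the Chevalley--Eilenberg differential $\delta$.

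Next I would observe that $\tau$ is a map of bicomplexes, hence of spectral sequences, so once it is an isomorphism on $E_1$ it is an isomorphism on every later page and, because both bicomplexes are first-quadrant (bounded below in both coordinates), on the abutments $H^*$ as well — this is the standard Eilenberg--Moore / Zeeman comparison argument for a morphism of first-quadrant spectral sequences. So the only genuine content is the $E_1$-level statement, and there the only subtlety is that $\fk{C}(\Sigma\fk{g}^\vee)$ is a \emph{cosimplicial} chain complex and one must know that taking its $E_1$ page (cohomology in the chain direction first) is compatible with the simplicial structure; but this is automatic since $\fk{C}$ is a functor applied degreewise.

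The main obstacle is really bookkeeping rather than mathematics: one must be careful that the cosimplicial structure on $\Sigma\fk{g}^\vee$ that \cref{tauDef} uses is the one for which $\fk{C}(\Sigma^\bul\fk{g}^\vee)$ maps to $\dR(BG_\bul,\C)$ \emph{as cosimplicial objects}, i.e. that the coface and codegeneracy maps on the Lie-coalgebra side correspond under $\tau$ to pullback along the face and degeneracy maps of the nerve $BG_\bul$. This is exactly what Bott's decomposition lemma provides, so I would simply cite \cite{bott} for the degreewise identification and for the compatibility of the simplicial operators, then assemble the pagewise comparison. In short: (1) invoke the decomposition lemma to see $\tau$ is a map of cosimplicial chain complexes inducing the Chevalley--Eilenberg isomorphism $H^*\fk{C}((\fk{g}^\vee)^{\oplus p}) \xrightarrow{\sim} H^*(G^p)$ in each simplicial degree $p$; (2) conclude $\tau$ induces an isomorphism on $E_1$ of the associated bicomplexes; (3) apply the comparison theorem for morphisms of first-quadrant spectral sequences to deduce the isomorphism on $\H^*$.
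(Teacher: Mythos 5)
Your proposal is correct and is essentially the argument the paper intends: the paper's own ``proof'' of \cref{comparison} is just the citation of Bott's decomposition lemma ``plus the discussion immediately afterwards,'' and your degreewise $E_1$-identification $H^*\fk{C}(\Sigma^p\fk{g}^\vee)\cong H^*(BG_p)$ followed by the first-quadrant comparison theorem is exactly that discussion made explicit. The only point worth folding into your citation of \cite{bott} is the one the paper records in the remark following the lemma: Bott states the decomposition lemma for the $G$-invariant subcomplex $\fk{C}(\Sigma\fk{g}^\vee)^G$ and for compact groups, so one must also note that connectedness and semisimplicity make $H^*\fk{C}(\Sigma\fk{g}^\vee)^G\to H^*\fk{C}(\Sigma\fk{g}^\vee)$ an isomorphism and allow passage to a maximal compact form.
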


\begin{remark} 
\Cref{comparison} is the decomposition lemma of \cite{bott}
plus the discussion immediately afterwards.  Bott works with the restriction of
$\tau$ to the $G$-invariants of
$\fk{C}(\Sigma\fk{g}^\vee)$.  Since $G$ is semisimple and connected $H^*
\fk{C}(\Sigma\fk{g}^\vee)^G \to H^* \fk{C}(\Sigma\fk{g}^\vee)$ is
an isomorphism, and it will be convenient for us to use this
less-refined version of Bott's decomposition lemma.  
\end{remark}

Bott then studies the spectral sequence of $\kk \fk{C}(\Sigma \fk{g}^\vee)$ in
the $\partial_I$ orientation (rather than the $\partial_{II}$ orientation used
to get the \v{C}ech-de Rham spectral sequence).  He shows it degenerates on
page 1, and (using the Eilenberg-Zilbur theorem) shows that the
Alexander-Whitney map identifies its total cohomology with $\C[\fk{g}]^G$ 
\cite[Lemma 3.1]{bott}.  Together with \cref{comparison} this
recovers the Chern-Weil isomorphism.

\begin{theorem} \label{models} \leavevmode
	\begin{enumerate}
		\item (Bott, \cite[Theorem 1]{bott}) $\tau$ induces an isomorphism
			$\fk{C}[\fk{g}]^G \to H^*(BG)$
		\item (Chevalley-Eilenberg, \cite[Theorem 15.2]{chevEilenberg})
			$\tau^{1,\bul}$ induces an isomorphism $\fk{C}\fk{g}^\vee \to H^*(G)$. 
	\end{enumerate}
\end{theorem}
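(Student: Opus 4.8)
The plan is to assemble both parts from Bott's results together with the classical theorem of Chevalley and Eilenberg, using the comparison lemmas \cref{simpCoh} and \cref{comparison} established above; nothing genuinely new is needed.

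For part~(1) I would show that, on total cohomology, $\tau$ realizes the composite
\[
\C[\fk{g}]^G \xrightarrow{\sim} H^*\fk{C}(\Sigma\fk{g}^\vee) \xrightarrow{\tau_*} \H^*\dR(BG_\bul,\C) \xrightarrow{\sim} H^*(BG),
\]
each arrow an isomorphism. The middle arrow is an isomorphism by \cref{comparison}. The right-hand arrow is the identification of the abutment of the \v{C}ech--de Rham spectral sequence with $H^*(pt/G) = H^*(BG)$: by \cref{simpCoh} (and the corollary following it) the spectral sequence $\dR(BG_\bul,\C)$ is isomorphic, from page $1$ on, to the \v{C}ech spectral sequence $\vC(BG_\bul,\C)$, whose abutment is $H^*(BG)$ by construction, so in particular the two abutments agree. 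The left-hand arrow is Bott's computation \cite[Lemma 3.1, Theorem 1]{bott}: the spectral sequence of $\kk\fk{C}(\Sigma\fk{g}^\vee)$ in the $\partial_I$ orientation degenerates on page $1$, and, via the Eilenberg--Zilber theorem, the Alexander--Whitney map identifies its total cohomology with the ring of invariant polynomials $\C[\fk{g}]^G = (\sym^\bul\fk{g}^\vee[2])^G$.

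The step that needs care --- and the only place where I would actually work through Bott's paper rather than cite it --- is the left-hand arrow. One must check that the grading conventions match (a polynomial of polynomial degree $d$ must land in $H^{2d}(BG)$, which is the point of the shift $[2]$) and, more importantly, that the composite above really is the isomorphism Bott constructs in his Theorem~1, so that ``$\tau$ induces'' is literally correct. Granting Bott's degeneration statement this is bookkeeping with the Eilenberg--Zilber homotopy, but it is the load-bearing step, and I expect it to be the main obstacle.

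For part~(2), observe that $\tau^{1,\bul}$ is, by the definition of $\tau$ in \cref{tauDef}, nothing but the inclusion of left-invariant complex-valued forms on $BG_1 = G$ into all forms, i.e.\ the inclusion $\fk{C}\fk{g}^\vee \into \Omega^\bul(G)$ of the Chevalley--Eilenberg complex into the de Rham complex of $G$. That this is a quasi-isomorphism --- the assertion that $\tau^{1,\bul}$ induces an isomorphism $H^*(\fk{C}\fk{g}^\vee) \to H^*(G)$ --- is the classical Chevalley--Eilenberg theorem \cite[Theorem 15.2]{chevEilenberg}; one reduces it to the compact form $K$ of $G$ (a homotopy equivalence, together with the restriction isomorphism of $\fk{C}\fk{g}^\vee$ with the complexified Chevalley--Eilenberg complex of $K$), where it follows by averaging forms over $K$. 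There is nothing to add beyond identifying $\tau^{1,\bul}$ with this inclusion. One could alternatively extract part~(2) from \cref{comparison} on $E_1$-pages in the de Rham orientation, where $\tau$ would identify $H^*\fk{C}(\Sigma^1\fk{g}^\vee) = H^*(\fk{C}\fk{g}^\vee)$ with the de Rham cohomology of $BG_1 = G$; but checking that $\tau$ is an $E_1$-isomorphism there uses the same input as the $p=1$ case, so quoting Chevalley--Eilenberg is cleaner.
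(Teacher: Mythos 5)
Your proposal matches the paper's treatment: Theorem \ref{models} is imported from Bott and Chevalley--Eilenberg, with the only original content being the reduction from the complex semisimple group to its maximal compact form (via the homotopy equivalences $K \to G$, $BK \to BG$ and the identification of $\fk{C}(\Sigma\fk{g}^\vee)$ with the complexification of the compact Chevalley--Eilenberg complex), which you carry out in essentially the same way. The extra bookkeeping you do for part~(1) --- assembling the composite through \cref{comparison} and \cref{simpCoh} --- is consistent with how the paper sets up $\tau$ and does not diverge from its argument.
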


\begin{remark} To get \cref{models} both Bott and
	Chevalley-Eilenberg require that $G$ be compact.  Since $G$ is
	semisimple we can reduce to that case by choosing a maximal compact $K
	\subset G$, whose Lie algebra $\fk{k}$ is a real form of $\fk{g}$ (see
	\cite{Knapp} 3.4, 3.5 and 4.5).  Then $K \to G$ is a deformation
	retract, $BK \to BG$ is a homotopy equivalence, and the
	Chevalley-Eilenberg complex of $\Sigma \fk{g}$ is the base change to
	$\C$ of the (real) Chevalley-Eilenberg complex of $\fk{k}_\R^\vee :=
	\hom_{\R}(\fk{k}, \R)$.  So if these statements hold for $K$ then
	they hold for $G$ as well.  \end{remark}

We can now describe an algorithm for the edge map.  The input is a homogeneous
invariant polynomial $a \in \left( \sym^d \fk{g}^\vee[2] \right)^G$ of degree
$2d$.  
\begin{algorithm} \leavevmode
\begin{enumerate} 
	\item Identify $a$ with some $\bar{a} \in \bigotimes^{d}
		\fk{g}^\vee$ under the inclusion $\sym^d \fk{g}^\vee \into
		\bigotimes^d \fk{g}^\vee$.  
	\item Use the inverse Alexander-Whitney map (see \cite{weibel} 8.5.4
		for the simplicial version) to identify $\bar{a}$ with some
		$a^{d,d} \in
		\kkk \left( \Sigma \fk{g}^\vee\right)^{\otimes d}$.  
	\item We now construct an element $(a^{p,q}) \in \bigoplus_{p+q=2d}
		\fk{C}^q (\Sigma \fk{g}^\vee)^p$ that represents the class of
		$H^* \fk{C} \left( \Sigma \fk{g}^\vee \right) $ corresponding
		to $a$ under the isomorphism of \cref{models}.
		Set $a^{p,q}=0$ for $p>d$.  We got $a^{d,d}$ in the
		previous step.  Now choose the remaining $a^{p,q}$ to satisfy
		the recurrence $\partial_{II} a^{p,q} = \partial_I
		a_{p-1,q+1}$.  This is solvable since the $\partial_I$-cohomology of $\kk^{\bul, q} 
		\fk{C}(\Sigma^\vee)$ is concentrated in degree $q$.
	\item Finally, $a^{1,2d-1} \in \wedge^{2d-1}\fk{g}^\vee =
		\fk{C}^{2d-1}\fk{g}^\vee$ represents $\epsilon_2^{1,2d-1}(a)$.
\end{enumerate}
\end{algorithm}

As an example we show
\begin{proposition}
	Let $G = PGL_2$, and let $a \in \C[\fk{g}]^G$ be the determinant
	polynomial.  Let $\eta \in \wedge^3 \fk{g}^\vee$ be the form
	$\eta(u,v,w) = \langle [u,v],w \rangle$ where $\langle -,- \rangle$ is
	the Killing form, and let $[\eta] \in H^3(\fk{C}\fk{g}^\vee)$ be the class
	it represents.  Then $\epsilon_2^{1,3}(a)=\frac{1}{2}[\eta]$.
\end{proposition}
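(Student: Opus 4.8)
The plan is to run the algorithm above with $\fk{g}=\fk{sl}_2=\mathrm{Lie}(PGL_2)$ and $d=2$, tracking the scalars carefully. One preliminary observation makes the target easy to recognise: for dimension reasons $\delta(\wedge^2\fk{g}^\vee)$ is at most one-dimensional, and a quick check in the standard basis shows $\delta$ vanishes on $\wedge^2\fk{g}^\vee$ entirely. Hence $H^3(\fk{C}\fk{g}^\vee)=\wedge^3\fk{g}^\vee$ has no exact classes and every $3$-cocycle equals a scalar times $e^*\wedge f^*\wedge h^*$; with the Killing normalisation $\langle X,Y\rangle=4\,\mathrm{tr}(XY)$ one gets $\eta=8\,e^*\wedge f^*\wedge h^*$, so $\tfrac12\eta=4\,e^*\wedge f^*\wedge h^*$. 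Since $PGL_2$ is semisimple and not of type $D$, \cref{edgeProp} and its remark already give $\epsilon_2^{1,3}(a)\sim[\eta]$; the content of the proposition is the value of the scalar, which I will read off as the output $a^{1,3}$ of the algorithm — and which, by the previous sentence, is literally a multiple of $e^*\wedge f^*\wedge h^*$ with no exact-form ambiguity.

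For Step 1, on trace-zero matrices $\det X=-\tfrac12\mathrm{tr}(X^2)=-\tfrac18\langle X,X\rangle$, so under $\sym^2\fk{g}^\vee\hookrightarrow\fk{g}^\vee\otimes\fk{g}^\vee$ the element $\bar a$ is $-\tfrac18$ times the symmetric Killing tensor, i.e.\ $\bar a=-h^*\otimes h^*-\tfrac12(e^*\otimes f^*+f^*\otimes e^*)$. For Step 2, in this bidegree the inverse Alexander--Whitney map is the cosimplicial shuffle $N^1\otimes N^1\to N^2$ of the levelwise tensor square of the cosimplicial vector space $\Sigma\fk{g}^\vee$, followed by the wedge multiplication. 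Writing $d^0,d^1,d^2\colon\Sigma^1\fk{g}^\vee\to\Sigma^2\fk{g}^\vee$ for the cofaces and identifying $\Sigma^1\fk{g}^\vee$ with $\fk{g}^\vee$, this sends $\xi\otimes\zeta$ to $d^1\xi\wedge d^0\zeta-d^0\xi\wedge d^1\zeta$ (up to the overall sign convention); since $d^1=d^0+d^2$ on $\Sigma^1$, the $d^1$-terms partly cancel and one is left with $d^2\xi\wedge d^0\zeta-d^0\xi\wedge d^2\zeta$. Feeding $\bar a$ through this produces an explicit $a^{2,2}\in\wedge^2\Sigma^2\fk{g}^\vee$. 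It is automatically $\partial_I$-closed, because the shuffle is a chain map and $d^0-d^1+d^2$ kills $\Sigma^1\fk{g}^\vee$, so every element of $\Sigma^1\fk{g}^\vee$ is a $\partial_I$-cocycle.

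For Step 3, the vanishing $\partial_I a^{2,2}=0$ together with the fact that $\partial_I\colon\wedge^3\fk{g}^\vee=\fk{C}^3(\Sigma^1\fk{g}^\vee)\to\fk{C}^3(\Sigma^2\fk{g}^\vee)$ is injective (the $\partial_I$-cohomology of the row $\kk^{\bullet,3}\fk{C}(\Sigma\fk{g}^\vee)$ is concentrated in simplicial degree $3$, and $\Sigma^0\fk{g}^\vee=0$) shows that the recurrence $\partial_I a^{1,3}=\partial_{II}a^{2,2}$ has a unique solution, and it has one at all because $\partial_I\partial_{II}a^{2,2}=-\partial_{II}\partial_I a^{2,2}=0$. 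So I would compute $\partial_{II}a^{2,2}$ — this is the Chevalley--Eilenberg differential of the Lie coalgebra $\Sigma^2\fk{g}^\vee$, which under the identification $\Sigma^2\fk{g}^\vee\cong(\fk{g}^\vee)^{\oplus 2}$ underlying Bott's decomposition lemma (the product Lie coalgebra, dual to $\fk{sl}_2\times\fk{sl}_2$) acts by $\delta$ on each $\fk{g}^\vee$-summand — and likewise compute $\partial_I$ on $\fk{C}^3(\Sigma^1\fk{g}^\vee)$ (the alternating sum of $\wedge^3$ of the three cofaces), then solve. The claim is that the unique solution is $a^{1,3}=4\,e^*\wedge f^*\wedge h^*=\tfrac12\eta$, which by Step 4 of the algorithm represents $\epsilon_2^{1,3}(a)$.

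The main obstacle is entirely the bookkeeping in Step 2 and the start of Step 3: fixing the precise form of the cosimplicial shuffle (its summands and signs) and of the Lie coalgebra structure on $\Sigma\fk{g}^\vee$ — the factor $\tfrac12$ is exactly the residue of their interaction, so carelessness there changes the answer. Once those conventions are pinned down it is a finite linear-algebra computation in $\wedge^\bullet\Sigma^2\fk{sl}_2^\vee$. Two safeguards keep it honest: the two sides $\partial_I a^{1,3}$ and $\partial_{II}a^{2,2}$ must coincide in $\fk{C}^3(\Sigma^2\fk{g}^\vee)$, which fixes the scalar independently of any overall sign in how the recurrence is written; and \cref{edgeProp} already guarantees the answer is nonzero, so only its value is in question.
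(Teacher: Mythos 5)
Your proposal follows the paper's own algorithm step for step --- the same $\bar a$, the same two-term inverse Alexander--Whitney formula (your $d^2\xi\wedge d^0\zeta - d^0\xi\wedge d^2\zeta$ reproduces the paper's $a^{2,2}$ exactly, as does your $\eta=8\,e^*\wedge f^*\wedge h^*$), the same recurrence, and the same final value $\tfrac12\eta$ --- so it is essentially the paper's proof, the only difference being that the paper executes the final check $\partial_{II}a^{2,2}=\partial_I\bigl(\tfrac12\eta\bigr)$ explicitly (verified in Sage) while you defer it to ``finite linear algebra.'' Your two added observations --- that $H^3(\fk{C}\fk{g}^\vee)=\wedge^3\fk{g}^\vee$ with no coboundaries, and that $\partial_I$ is injective on $\fk{C}^3(\Sigma^1\fk{g}^\vee)$ so the recurrence has a unique solution --- are correct and make the determination of the scalar cleaner than in the paper.
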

\begin{proof} Choose coordinates $x,y,z \in \fk{g}^\vee$ given by  
$\left[ \begin{array}{cc} 
		x & y \\ 
		z & -x
\end{array} \right]$.
In other words $x, y, z$ are dual to the basis $(h, e, f)$ where 
\[ 
h = \left[ \begin{array}{cc} 1&0\\0&-1 \end{array} \right] \hspace{10mm} 
e = \left[ \begin{array}{cc} 0&1\\0&0 \end{array}  \right] \hspace{10mm} 
f = \left[ \begin{array}{cc} 0&0\\1&0 \end{array} \right] \hspace{10mm} \] 
Let $a = -x^2 - yz \in \left(\sym^2 \fk{g}^\vee \right)^G$, the determinant 
polynomial.  Then $\bar{a} = -x \otimes x - \frac{1}{2}\left( y \otimes z + z
\otimes y \right)$.  Under the identification $H^1(\Sigma \fk{g}^\vee) \otimes
H^1(\Sigma \fk{g}^\vee) \cong \fk{g}^\vee \otimes \fk{g}^\vee$, $\bar{a}$ is
represented by \[ \col{-x \\ x} \otimes \col{ x \\ -x} + \frac{1}{2}
\col{ -y \\y} \otimes \col{ z \\ -z} + \frac{1}{2}\col{ -z \\z}
\otimes \col{ y \\-y} \in C^1\fk{g}^\vee \otimes C^1 \fk{g}^\vee\] 

To keep the notation under control, write $x_1 = \col{x \\ 0 \\0}$, $y_2 =
\col{0 \\y \\ 0}$, etc. and identify $u \wedge v = \frac{1}{2} (u\otimes v
-v\otimes u)$.  
Then the inverse Alexander-Whitney map sends this
representative of $\bar{a}$ to $a^{2,2} \in \bigwedge^2 C^2 \fk{g}^\vee$ given by
\begin{align*}
	a^{2,2} = \hspace{2mm}
& 2 \left( x_2 \wedge x_1 + x_3 \wedge x_2 + x_1 \wedge x_3 \right)  \\
& +z_2 \wedge y_1 + y_1 \wedge z_3 + y_3 \wedge z_2 + z_3 \wedge y_3 \\
& +y_2 \wedge z_1 + z_1 \wedge y_3 + z_3 \wedge y_2 + y_3 \wedge z_3 
\end{align*} 

Now we compute the image of $a^{2,2}$ under the Chevalley-Eilenberg differential
$\partial_{II}$.  Note that 
\[ 
	\partial_{II} y = 4 x \wedge y \hspace{1.5cm}
	\partial_{II} z = 4 z \wedge x \hspace{1.5cm}
\partial_h x = 2 z \wedge y \]
Therefore $\partial_h y_3 = 4x_3\wedge y_3$, and similarly for the
other variables.  Writing $x_1y_2z_3 := x_1\wedge y_2 \wedge z_3$ etc. one
obtains by the Leibniz rule 

\begin{align*} \frac{1}{4}\partial_{II} a^{2,2} =& \,\,\,  x_1 y_2 z_2 - x_2 y_1 z_1 +
	x_2 y_3 z_3 -  x_3 y_2 z_2 + x_3 y_1 z_1 -  x_1 y_3 z_3 \\& +z_2 x_2
	y_1 - z_2 x_1 y_1 + x_1 y_1 z_3 - y_1 z_3 x_3 + x_3 y_3 z_2 - y_3 z_2
	x_2  \\& +x_2 y_2 z_1 - y_2 z_1 x_1 +z_1 x_1 y_3 -  z_1 x_3 y_3 + z_3
	x_3 y_2 -  z_3 x_2 y_2  \in \bigwedge^3 C^2 \fk{g}^\vee \end{align*}

In our coordinates $\eta = 8 x \wedge y \wedge z$,
which we identify with the element $8 (x_1-x_2)\wedge (y_1-y_2) \wedge
(z_1-z_2) \in \bigwedge^3 \Sigma^1 \fk{g}^\vee$.  We compute \begin{align*}
	\partial_I \frac{1}{8}\eta =& \,\, (x_1-x_2)\wedge (y_1-y_2) \wedge
	(z_1-z_2) \\ &-     (x_1-x_3)\wedge (y_1-y_3) \wedge (z_1-z_3) \\ &+
	(x_2-x_3)\wedge (y_2-y_3) \wedge (z_2-z_3) \end{align*} Sage
\cite{sage} verifies that this equals $\partial_{II}\frac{1}{4} a^{2,2}$, 
so $a^{1,3} = \frac{1}{2}\eta$ solves the recurrence.
\end{proof}

\bibliographystyle{plain} \bibliography{vinberg}

\end{document}